\numberwithin{equation}{section}
\newcommand{\pp}{\mathbb P}
\newcommand{\cc}{\mathbb C}
\newcommand{\bt}{\mathbb T}
\newcommand{\ox}{\mathcal{O}_X}
\newcommand{\Kx}{K_X}
\newcommand{\Z}{\mathbb Z}
\newcommand{\U}{\mathcal U}
\newcommand{\rank}{\mathrm{rk}\,}
\newcommand{\Hom}{\mathrm{Hom}}
\newcommand{\Ker}{\mathrm{Ker}}
\newcommand{\Sec}{\mathrm{Sec}}
\newcommand{\Pic}{\mathrm{Pic}}
\newcommand{\Gr}{\mathrm{Gr}}
\newcommand{\OG}{\mathrm{OG}}
\newtheorem{theorem}{{\textbf Theorem}}[section]
\newtheorem{prop}[theorem]{{\textbf Proposition}}
\newtheorem{cor}[theorem]{{\textbf Corollary}}
\newtheorem{lemma}[theorem]{{\textbf Lemma}}
\newtheorem{rmk}[theorem]{{\textbf Remark}}
\newenvironment{remark}{\begin{rmk}\rm}{\end{rmk}}
\title[Non-defectivity of Grassmannian bundles]{Non-defectivity of Grassmannian bundles\\over a curve}
\author{Insong Choe and George H. Hitching}
\begin{document}
\thispagestyle{empty}

\begin{abstract} Let $\Gr(2, E)$ be the Grassmann bundle of two-planes associated to a general bundle $E$ over a curve $X$. We prove that an embedding of $\Gr(2, E)$ by a certain twist of the relative Pl\"ucker map is not secant defective. This yields a new and more geometric proof of the Hirschowitz-type bound on the Lagrangian Segre invariant for orthogonal bundles over $X$, analogous to those given for vector bundles and symplectic bundles in \cite{CH1, CH2}. From the non-defectivity we also deduce an interesting feature of a general orthogonal bundle over $X$, contrasting with the classical and symplectic cases: Any maximal Lagrangian subbundle intersects at least one other maximal Lagrangian subbundle in positive rank.
\end{abstract}

\maketitle

\section{Introduction}

Let $X$ be a smooth complex projective curve of genus $g \ge 2$. 
 In the 1980s, Hirschowitz \cite{Hir} found that there do not exist vector bundles of a fixed rank and degree over $X$ with maximal subbundles of arbitrarily small degree. Precisely; let $V \to X$ be any vector bundle of rank $n \ge 2$. For $1 \le r \le n-1$, the \textsl{Segre invariant} $s_r(V)$ is defined by
\[ s_r (V) \ := \ \min \{ r \cdot \deg V - n \cdot \deg E : \hbox{ $E$ a rank $r$ subbundle of $V$} \}. \]
Hirschowitz \cite[Th\'eor\`eme 4.4]{Hir} showed that one always has $s_r (V) \le r(n-r)(g-1) + \delta$ for a certain $\delta \in \{0 , \ldots , n-1 \}$, with equality if $V$ is general. A geometric proof of this result was given in \cite[\S 5]{CH1}, exploiting the secant non-defectivity of a certain embedded Segre fibration proven in \cite[Theorem 5.1]{CH1}. This proof can be regarded as a generalization of Lange and Narasimhan's proof \cite[\S 3]{LN} of Nagata's bound for rank two bundles, which exploited the non-defectivity of certain curves in projective space.

Suppose now that $V$ admits an orthogonal or symplectic structure. We recall that a subbundle $E \subset V$ is called \textsl{Lagrangian} if $E$ is isotropic and has the largest possible rank $\left\lfloor \frac{1}{2} \rank V \right \rfloor$. The \textsl{Lagrangian Segre invariant} is defined as
\[ t(V) \ := \ \min\{ -2 \cdot \deg E : \hbox{ $E \subset V$ a Lagrangian subbundle} \}. \]
A Lagrangian subbundle will be called \textsl{maximal} if it has maximal degree among all Lagrangian subbundles. In \cite[\S 3]{CH2}, with arguments analogous to those in \cite{CH1}, the non-defectivity of certain embedded Veronese fibrations was proven and used to compute the sharp upper bound on $t(V)$ in the symplectic case. (Note that if $V$ is symplectic then $\rank V$ is even.)

In \cite{CH3}, a sharp upper bound on $t(V)$ was given for orthogonal bundles of rank $2n$, by a different method (for comparison, this is briefly sketched in Remark \ref{quotarg}). However, compared with the treatment of vector bundles and symplectic bundles in \cite{CH1, CH2}, there is a missing geometric picture in the orthogonal case, namely, the non-defectivity of the object corresponding to the aforementioned Segre and Veronese fibrations. By \cite[\S 2]{CH3}, this turns out to be an embedding of the Grassmannian bundle $\Gr (2, E)$ whose fiber at a point $x$ is the Grassmannian of planes $\Gr(2, E_x)$ for a generic vector bundle $E \to X$ of rank $n$.

The first goal of the present note is to complete the picture for orthogonal bundles by showing the non-defectivity of these Grassmannian bundles. It is relevant to point out that the Grassmannian parameterizing projective lines in $\pp^N$ is secant defective in most cases; see Catalisano--Geramita--Gimigliano \cite{CGG}.

Here is an overview of the paper. In \S 2 we recall some results on the geometry of orthogonal extensions. In \S 3 we prove the desired non-defectivity statement (Theorem \ref{nondefectivity}) for $\Gr(2, E)$. As in the classical and symplectic cases, the strategy is to describe the embedded tangent spaces of $\Gr(2, E)$ and apply Terracini's Lemma.

In \S 4, we use Theorem \ref{nondefectivity} to give a proof of the Hirschowitz-type bound on the Lagrangian Segre invariant of orthogonal bundles, analogous to those mentioned above in \cite[\S 5]{CH1} and \cite[Theorem 1.4]{CH2}.

Furthermore, in \S 5 we use Theorem \ref{nondefectivity} to answer a question which we were unable to solve with the methods in \cite{CH3}: We show that any maximal Lagrangian subbundle of a general orthogonal bundle meets another maximal Lagrangian subbundle in a sheaf of positive rank. In this way orthogonal bundles behave differently from general vector bundles and symplectic bundles. More information and precise statements are given in Theorem \ref{intersection}.

Regarding future investigations: If $Q \to X$ is a principal $G$-bundle, the notion of a subbundle or isotropic subbundle generalizes to that of a reduction of structure group to a maximal parabolic subgroup $P \subset G$; equivalently, a section $\sigma \colon X \to Q/P$. The Segre invariant $s_r(V)$ or $t(V)$ is replaced by the number
\[ s_P(Q) \ := \ \min \left\{ \deg \sigma^* T^\mathrm{vert}_{(Q/P)/X} : \hbox{ $\sigma$ a reduction of structure group to $P$} \right\} \]
where $T^\mathrm{vert}_{(Q/P)/X}$ is the tangent bundle along fibers of $Q/P \to X$. Holla and Narasimhan \cite{HN} computed an upper bound on $s_P$, which is not always sharp. The strategy of exploiting secant non-defectivity has given sharp upper bounds on certain $s_P$ if $G$ is $\mathrm{GL}_r \cc$, $\mathrm{Sp}_{2n} \cc$ or $\mathrm{SO}_{2n} \cc$. It would be interesting to investigate whether these ideas can be used to give a sharp upper bound on $s_P$ in general.

Although the present note can be read independently of \cite{CH1, CH2, CH3, CH4}, we use several results from these articles. In particular, access to \cite[\S 2 and \S 5]{CH3} may be helpful for the reader.

\section{Grassmannian bundles inside the extension spaces}

Here we recall some notions from \cite{CH2, CH3}. Let $X$ be a projective curve over $\cc$ which is smooth and irreducible of genus $g \ge 2$. Let $W$ be a vector bundle over $X$. Via Serre duality and the projection formula, there are identifications
\[ H^1 (X, W) \ \cong \ H^0 (X, \Kx \otimes W^*)^* \ \cong 
\ H^0 \left( \pp W, \pi^* \Kx \otimes {\mathcal O}_{\pp W}(1) \right)^*. \]
Thus we obtain naturally a rational map $\phi \colon \pp W \dashrightarrow \pp H^1 (X, W)$.

Suppose now that $W = \wedge^2 E$ for a vector bundle $E$ of rank $n \ge 2$. Consider the fiber bundle $\Gr(2, E)$ over $X$ whose fiber at $x \in X$ is the Grassmannian of 2-dimensional subspaces of $E_x$. Then we get a rational map
\[
\psi \colon \Gr(2,E) \dashrightarrow \pp H^{1}(X, \wedge^{2} E)
\]
by composing $\phi$ with the fiberwise Pl\"ucker embedding. In fact there is a diagram
\begin{equation}
\xymatrix{    & \pp(E \otimes E) \ \ar@{-->}^-{\widetilde{\phi}}[r] & \ \pp H^1(X, E \otimes E)   \\
   \Gr(2, E) \ \ar@{^{(}->}[r]  & \ \pp (\wedge^2 E) \ \ar@{-->}[r]^-\phi \ar@{^{(}->}[u] & \ \pp H^1(X, \wedge^2 E) . \ \ar@{^{(}->}[u]  } \label{extensions}
 \end{equation}
By the above discussion, it is easy to see that the line bundle on $\Gr(2, E)$ inducing $\psi$ is $\pi^* \Kx \otimes \det\, \U^*$, where $\U^*$ is the relative universal bundle on $\Gr(2, E)$.

Recall that the \textsl{slope} of a bundle $E$ is defined as the ratio $\mu(E) := \deg (E) / {\rm rk} (E)$. The following is a consequence of \cite[Lemma 2.2]{CH3}:
\begin{lemma} \label{embed} Let $E \to X$ be a stable bundle with $\mu(E) < -1$. Then
\[ \widetilde{\phi} \colon \pp (E \otimes E) \dashrightarrow \pp H^{1}(X, E \otimes E) \quad \hbox{and} \quad \psi \colon \Gr(2, E) \dashrightarrow \pp H^{1}(X, \wedge^2 E) \]
are embeddings. \qed \end{lemma}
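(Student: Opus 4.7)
The plan is to deduce both statements from the general criterion of \cite[Lemma 2.2]{CH3}, which provides sufficient conditions on a bundle $W \to X$ for the rational map $\pp W \dashrightarrow \pp H^1(X, W)$ to be an embedding, in terms of the positivity of $\Kx \otimes W^*$. I would apply it first to $W = E \otimes E$ to handle $\widetilde{\phi}$, and then use the commutative diagram \eqref{extensions} to transfer the conclusion to $\psi$.

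For $\widetilde{\phi}$: since $E$ is stable and we work in characteristic zero, $E \otimes E$ is polystable of slope $2\mu(E) < -2$. Consequently $\Kx \otimes (E \otimes E)^*$ is polystable of slope $(2g-2) - 2\mu(E) > 2g$. Each stable summand therefore carries enough global sections to separate points and tangent vectors of $X$, a standard consequence of slope exceeding $2g$ for a stable bundle on a curve. This positivity is exactly what is required by \cite[Lemma 2.2]{CH3}: it gives base-point freeness of the linear system $|\pi^* \Kx \otimes \mathcal{O}_{\pp(E \otimes E)}(1)|$ noted in the paragraph preceding the lemma, and injectivity of the differential of $\widetilde{\phi}$ in both the vertical (fiber) and horizontal (base) directions, yielding that $\widetilde{\phi}$ is an embedding.

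For $\psi$: the commutative diagram \eqref{extensions} exhibits $\psi$ as the composition
\[ \Gr(2, E) \ \hookrightarrow \ \pp(\wedge^2 E) \ \hookrightarrow \ \pp(E \otimes E) \ \xrightarrow{\widetilde{\phi}} \ \pp H^1(X, E \otimes E) \]
whose image lies in the linear subspace $\pp H^1(X, \wedge^2 E)$ shown on the right of the diagram. Since $\widetilde{\phi}$ is an embedding on all of $\pp(E \otimes E)$ by the previous step, its restriction to any closed subscheme is still an embedding. Applying this to the closed subscheme $\Gr(2, E) \subset \pp(\wedge^2 E) \subset \pp(E \otimes E)$ gives the second claim.

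The main obstacle is really absorbed into the cited \cite[Lemma 2.2]{CH3}: all of the cohomological content—base-point freeness and separation of points and tangent vectors of $\pp W$ via sections of $\pi^* \Kx \otimes \mathcal{O}_{\pp W}(1) \cong H^0(X, \Kx \otimes W^*)$—is packaged there. The only independent work is matching our hypothesis $\mu(E) < -1$ with the slope threshold appearing in \cite[Lemma 2.2]{CH3}, and confirming that polystability (rather than full stability) of $E \otimes E$ is enough for the cited criterion to apply.
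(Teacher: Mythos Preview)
Your proposal is correct and matches the paper's approach: the paper simply records that the lemma is a consequence of \cite[Lemma 2.2]{CH3} and gives no further argument, while you supply the natural elaboration (verifying the slope hypothesis for $W = E \otimes E$ via polystability, and then restricting the embedding $\widetilde{\phi}$ to the closed subvariety $\Gr(2,E) \subset \pp(E \otimes E)$ to handle $\psi$).
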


\noindent Now the space $H^1(X, E \otimes E) $ is a parameter space for extensions 
\[
0 \to E \to V \to E^* \to 0.
\]
By \cite[Criterion 2.1]{Hit1}, the subspace $H^{1}(X, \wedge^2 E)$ parameterizes extensions $V$ with an orthogonal structure with respect to which $E$ is Lagrangian. As discussed in \cite{CH3}, there is a relationship between the Segre stratification on the moduli space of orthogonal bundles and the stratification given by the higher secant variety of $\Gr(2, E)$ inside $\pp H^{1}(X, \wedge^2 E)$. This motivates the work in the next section, and will be discussed in more detail in \S 4.

\section{Non-defectivity of Grassmannian bundles}

In this section, we assume that $E$ is a general stable bundle of rank $n$ and slope $\mu(E) < -1$, and consider the embedding $\psi \colon \Gr(2, E) \hookrightarrow \pp H^1 (X, \wedge^2 E)$.
For each positive integer $k$, the $k$-th secant variety $\Sec^k \Gr(2, E)$ is the Zariski closure of the union of all the linear spans of $k$ general points of $\Gr(2,E)$. 
We say that $\Gr(2, E)$ is \textit{non-defective} if for all $k \ge 1$, we have
\[
\dim \Sec^k \Gr(2, E) \ = \ \min \{ k \cdot \dim \Gr(2, E) + (k-1), \ \dim \pp H^{1}(X, \wedge^2 E) \}.
\]

\begin{theorem} \label{nondefectivity} For a general stable bundle $E$ of rank $n$ and degree $d < -n$, the Grassmannian bundle $\Gr(2, E) \subset \pp H^1 (X, \wedge^2 E)$ is non-defective.
\end{theorem}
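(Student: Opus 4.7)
The plan is to reduce the statement to a cohomological dimension count via Terracini's Lemma. If $p_1 = (x_1, \Lambda_1), \ldots, p_k = (x_k, \Lambda_k)$ are $k$ general points of $\Gr(2, E)$, then Terracini identifies the tangent space to $\Sec^k \Gr(2, E)$ at a general point of the linear span $\langle \psi(p_1), \ldots, \psi(p_k) \rangle$ with the projective span of the embedded tangent spaces $\widetilde{T}_{p_i} \Gr(2, E) \subset \pp H^1(X, \wedge^2 E)$. Thus the non-defectivity of $\Gr(2, E)$ reduces to the equality
\[
\dim \bigl\langle \widetilde{T}_{p_1} \Gr(2, E), \ldots, \widetilde{T}_{p_k} \Gr(2, E) \bigr\rangle \ = \ \min\bigl\{ k(2n-2) - 1, \ h^1(X, \wedge^2 E) - 1 \bigr\}.
\]

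The next step is to describe each $\widetilde{T}_{(x, \Lambda)} \Gr(2, E)$ cohomologically, following the pattern of \cite[\S 2]{CH3} and \cite{CH1, CH2}. The fiberwise Pl\"ucker tangent space at $\Lambda$ is $\pp(\Lambda \wedge E_x) \subset \pp(\wedge^2 E_x)$, of dimension $2n - 4$, and combined with the one-dimensional direction tangent to $X$ produces an affine tangent subspace of dimension $2n - 2$. A standard connecting-homomorphism argument identifies this with the image in $H^1(X, \wedge^2 E)$ of an elementary transformation
\[
0 \to \wedge^2 E \to \mathcal F_{(x, \Lambda)} \to T_{(x, \Lambda)} \to 0,
\]
where $T_{(x, \Lambda)}$ is a torsion sheaf of length $2n - 2$ supported at $x$ encoding both the fiberwise directions in $\Lambda \wedge E_x$ and the direction along the base. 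Iterating over the $k$ chosen points, the expected-dimension statement translates, via Serre duality, into the claim that the resulting evaluation conditions at $p_1, \ldots, p_k$ impose $\min\{ k(2n-2), \ h^0(X, \Kx \otimes (\wedge^2 E)^*) \}$ independent linear conditions on $H^0(X, \Kx \otimes (\wedge^2 E)^*)$.

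The final step is to verify this expected rank by specialization, making essential use of the generality of $E$. Since the relevant dimensions are upper semi-continuous in the parameters $(E, p_1, \ldots, p_k)$, it suffices to produce a single stable bundle $E_0$ of slope $\mu < -1$ and a single $k$-tuple of points for which the evaluation map attains the expected rank. In the spirit of \cite[Theorem 5.1]{CH1} and \cite[Theorem 3.1]{CH2}, one can assemble $E_0$ from line bundles (for instance, a direct sum or a general iterated extension of line bundles of suitable degrees), so that $\wedge^2 E_0$ splits into summands with transparent cohomology and the evaluation conditions at the specialized points reduce to elementary statements about sections of line bundles with prescribed zeros. The main obstacle will be this last step: one must arrange the combinatorics of the degeneration so that the $k$ torsion conditions remain independent for all relevant values of $k$, $n$, and $d$, and the borderline range $k(2n - 2) \ge h^1(X, \wedge^2 E)$ (where the secant variety fills $\pp H^1$) must be accommodated in the same framework. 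Once this is secured, Terracini's Lemma together with semicontinuity yields the theorem.
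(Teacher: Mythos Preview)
Your Terracini setup and the description of a single embedded tangent space are essentially the same as in the paper. The divergence comes at the verification step. You propose to specialize $E$ to a direct sum or iterated extension of line bundles and check the independence of the $k(2n-2)$ conditions by hand, acknowledging this combinatorics as ``the main obstacle''. The paper bypasses this entirely with one further structural observation that you do not make: if $F$ is the \emph{single} elementary transformation of $E$ along all $k$ planes $\Lambda_1, \ldots, \Lambda_k$ simultaneously, then the linear span of the $k$ embedded tangent spaces is exactly
\[
\pp \Ker \left[ H^1(X, \wedge^2 E) \longrightarrow H^1(X, \wedge^2 F) \right].
\]
Thus the whole question collapses to whether $\wedge^2 F$ is non-special for a general bundle $F$ of the given rank and degree, and this is known (Lemma~\ref{non-special}, relying on \cite[Appendix~A]{CH2}). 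One then argues in reverse: choose a general $F_0$ first (automatically non-special), take any $E_0 \subset F_0$ of the correct shape, and invoke openness and semicontinuity.

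Your specialization route could in principle be pushed through, but as written it is only an outline: the ``main obstacle'' you flag is essentially the entire content of the proof, and you give no indication of how to resolve it. There are also two technical wrinkles. First, a direct sum of line bundles is not stable, so your ``single stable bundle $E_0$'' is inconsistent with the specialization you propose; you would need to argue via a non-stable degeneration and check that $\psi$ remains well-behaved there. Second, the torsion at each point carries a genuine second-order term (the class of $\frac{e_1 \wedge e_2}{z^2}$ in the paper's notation) alongside the first-order fiberwise terms, so the bookkeeping is more delicate than the analogous Segre and Veronese cases in \cite{CH1, CH2} that you cite as models. The paper's reduction to non-speciality of $\wedge^2 F$ is precisely what makes all of this unnecessary.
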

We will prove this theorem by applying Terracini's Lemma. To do this, we must first describe the embedded tangent spaces of $\Gr(2, E)$. Now a point of $\Gr(2, E)$ corresponds to a two-dimensional subspace $P \subseteq E|_x$ for some $x \in X$. Let $\hat{E}$ be the elementary transformation of $E$ along this subspace:
\[
0 \to E \to \hat{E} \to \cc_x \oplus \cc_x \to 0.
\]
We may regard $\hat{E}$ as the sheaf of sections of $E$ which are regular apart from at most simple poles at $x$ in directions corresponding to $P$. This induces a sequence $0 \to \wedge^2 E \to \wedge^2 \hat{E} \to \tau_P \to 0$, where $\tau_P$ is torsion of degree $2(n-1)$. 
 The associated cohomology sequence is
\begin{equation}
\cdots \to \Gamma \left( \tau_P \right) \to H^1(X, \wedge^2 E) \longrightarrow H^1(X, \wedge^2 \hat{E})  \longrightarrow 0. \label{tauP}
\end{equation}
Let us describe $\tau_P$ more explicitly. Choose a local coordinate $z$ centered at $x$, and a local frame $e_1, e_2, \ldots , e_n$ of $E$ near $x$, where $e_1(x)$ and $e_2(x)$ span $P$. Then $\Gamma(\tau_P)$ has a basis consisting of the following principal parts:
\begin{equation} \frac{e_{1} \wedge e_i}{z} : \ i = 2, \ldots , n, \quad \frac{e_{j} \wedge e_2}{z}: \ j = 3, \ldots , n, \quad \hbox{and} \quad \frac{e_{1} \wedge e_2}{z^2}. \label{GammatauPbasis} \end{equation}
Note that $\frac{e_i \wedge e_j}{z}$ depends only on the values of the sections $e_i$ and $e_j$ at $x$, but $\frac{e_{1} \wedge e_2}{z^2}$ also depends on the $1$-jets of $e_1$ and $e_2$. However, the image of $\Gamma(\tau_P)$ in $H^1 (X, \wedge^2 E)$ depends only on the subspace $P$.

\begin{lemma} \label{tangent} For $P \in \Gr(2, E)$, the embedded tangent space $\bt_P \Gr(2, E)$ to $\Gr(2, E)$ at $P$ coincides with
\[ \pp \Ker \left[ H^1(X, \wedge^2 E) \longrightarrow H^1 \left( X, \wedge^2 \hat{E} \right) \right]. \]
\end{lemma}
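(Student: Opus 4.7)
The plan is to compute the embedded tangent space $\bt_P \Gr(2,E)$ by differentiating $\psi$ along explicit one-parameter families through $P$, match each resulting tangent direction with a basis vector of $\Gamma(\tau_P)$ from \eqref{GammatauPbasis}, and then invoke \eqref{tauP} to identify the image of $\Gamma(\tau_P)$ in $H^1(X, \wedge^2 E)$ with the kernel in the statement. Fix the local coordinate $z$ at $x$ and the frame $e_1, \ldots, e_n$ from the excerpt. By construction $\psi(P)$ is represented by the principal part $\frac{e_1 \wedge e_2}{z}$, and $\bt_P \Gr(2, E)$ is the projective span of $\psi(P)$ together with the image of $d\psi_P \colon T_P \Gr(2,E) \to T_{\psi(P)} \pp H^1(X, \wedge^2 E)$. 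The submersion $\pi \colon \Gr(2, E) \to X$ splits $T_P \Gr(2, E)$ (non-canonically) into a vertical summand $T_P \Gr(2, E_x) = \Hom(P, E_x/P)$ of dimension $2(n-2)$ and a one-dimensional horizontal summand, which I treat separately.

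For a vertical tangent $\phi \colon P \to E_x/P$ with $\phi(e_i(x)) \equiv \sum_{j \ge 3} a_{ji} \, e_j(x) \bmod P$, the family $P(t) = \langle e_1 + t \sum_{j \ge 3} a_{j1} e_j,\ e_2 + t \sum_{j \ge 3} a_{j2} e_j \rangle$ at $x$ has Pl\"ucker representative $e_1 \wedge e_2 + O(t)$, and differentiating the corresponding principal part at $t = 0$ yields
\[
\sum_{j \ge 3} a_{j1} \cdot \frac{e_j \wedge e_2}{z} \ + \ \sum_{j \ge 3} a_{j2} \cdot \frac{e_1 \wedge e_j}{z}.
\]
As $\phi$ varies, these directions span the subspace of $\Gamma(\tau_P)$ generated by the first two families of basis vectors in \eqref{GammatauPbasis}. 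For the horizontal direction, let $x(t)$ be the point with coordinate $z = t$ and consider $P(t) = \langle e_1(x(t)), e_2(x(t)) \rangle$; then $\psi(P(t))$ is represented by the principal part $\frac{e_1 \wedge e_2}{z - t}$, and differentiating at $t = 0$ produces the class of $\frac{e_1 \wedge e_2}{z^2}$, the remaining basis vector of \eqref{GammatauPbasis}.

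Together with $\psi(P)$ itself, the above tangent directions span the image of the connecting map $\Gamma(\tau_P) \to H^1(X, \wedge^2 E)$, which by \eqref{tauP} equals the stated kernel; thus $\bt_P \Gr(2, E) \subseteq \pp \Ker[H^1(X, \wedge^2 E) \to H^1(X, \wedge^2 \hat E)]$. To promote this to an equality I would check dimensions. Since $E$ is general stable with $\mu(E) < -1$, both $\wedge^2 E$ and $\wedge^2 \hat E$ (noting $\mu(\hat E) = \mu(E) + 2/n < 0$) are semistable of negative slope, hence have no global sections, so $\Gamma(\tau_P) \hookrightarrow H^1(X, \wedge^2 E)$ is injective of image dimension $2(n-1)$. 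Since $\psi$ is an embedding (Lemma \ref{embed}) and $\dim \Gr(2, E) = 2n-3$, the affine cone on $\bt_P \Gr(2, E)$ also has dimension $2(n-1)$, forcing equality.

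The most delicate step will be the horizontal computation: one must verify that $\frac{d}{dt}\bigl|_0 \bigl[\frac{e_1 \wedge e_2}{z - t}\bigr]$ really yields the nonzero class of $\frac{e_1 \wedge e_2}{z^2}$ in $H^1$, and that this class is independent of the vertical directions already found. A frame change $e_i \mapsto e_i + z f_i$ shifts $\frac{e_1 \wedge e_2}{z^2}$ by the principal part $\frac{f_1 \wedge e_2 + e_1 \wedge f_2}{z}$, which lies in the span of the first two families in \eqref{GammatauPbasis}; this confirms that the horizontal class is well-defined modulo the vertical summand and genuinely contributes a new direction.
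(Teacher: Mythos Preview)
Your argument is correct and complete, but it proceeds along a genuinely different route from the paper's. The paper does not differentiate $\psi$ directly. Instead, it passes through the decomposable locus $\Delta \cong \pp E \times_X \pp E \subset \pp(E \otimes E)$: by \cite[Lemma~5.3]{CH1} one already knows that $\bt_{e_1 \otimes e_2}\Delta = \pp \Ker[H^1(E \otimes E) \to H^1(E_1 \otimes E_2)]$, and since $\Gr(2,E)$ is exactly the image of $\Delta$ under the projection $E \otimes E \to \wedge^2 E$, the embedded tangent space $\bt_P\Gr(2,E)$ is obtained by antisymmetrizing the spanning principal parts of $\bt_{e_1 \otimes e_2}\Delta$. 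These antisymmetrizations are precisely the basis \eqref{GammatauPbasis} of $\Gamma(\tau_P)$, and one concludes via \eqref{tauP}. So the paper reduces the lemma to the Segre-bundle computation already done in \cite{CH1}, whereas you compute on $\Gr(2,E)$ from scratch. Your approach is more self-contained; the paper's is shorter given the available reference.

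Two remarks on your write-up. First, your dimension comparison at the end is actually unnecessary: you have shown that $\psi(P)$ and the derivatives $d\psi_P(v)$ are exactly the images in $H^1(X,\wedge^2 E)$ of the basis elements \eqref{GammatauPbasis}, so the affine cone on $\bt_P\Gr(2,E)$ both contains and is contained in the image of $\Gamma(\tau_P)$, hence equals the kernel by \eqref{tauP}. Second, the claim that $\wedge^2\hat E$ is semistable is not justified for an \emph{arbitrary} $P$ (the lemma is asserted for every $P$, and an elementary transformation of a stable bundle at a specific subspace need not be semistable). Fortunately you do not need it: even keeping your dimension argument, the bound $\dim\Ker \le \dim\Gamma(\tau_P) = 2(n-1)$ holds unconditionally, and combined with $\dim\bt_P\Gr(2,E) = 2(n-1)$ (from $\psi$ being an embedding) and the inclusion $\bt_P\Gr(2,E) \subseteq \Ker$, equality follows.
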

\begin{proof}
Let $z$ and $e_1, e_2, \ldots , e_n$ be as above. For $1 \le i \le 2$, let $E_i$ be the elementary transformation of $E$ satisfying
\[ \Ker \left( E|_x \to E_i |_x \right) = \cc \cdot e_i (x). \]
Recall that the \textsl{decomposable locus} $\Delta$ of $\pp (E \otimes E)$ is defined by
\[ \Delta \ = \ \bigcup_{x \in X} \pp \{ e \otimes f : \hbox{$e, f$ nonzero in $E|_x$} \} \ \cong \ \pp E \times_X \pp E. \]
Now by hypothesis and by Lemma \ref{embed}, we also have an embedding
\[ \tilde{\phi} \colon \pp(E \otimes E) \hookrightarrow \pp H^1 (X, E \otimes E). \]
By \cite[Lemma 5.3]{CH1}, the embedded tangent space $\bt_{e_1 \otimes e_2} \Delta$ is given by
\[ \pp \Ker \left[ H^{1}(X, E \otimes E) \longrightarrow H^{1}(X, E_1 \otimes E_2) \right]. \]
Therefore, by the cohomology sequence of
\[ 0 \to E \otimes E \to E_1 \otimes E_2 \to \frac{E_1 \otimes E_2}{E \otimes E} \to 0, \]
we see that $\bt_{e_1 \otimes e_2} \Delta$ is (freely) spanned by the cohomology classes of the principal parts
\begin{equation} \frac{e_1 \otimes e_i}{z}: i = 1, \ldots , n, \quad \frac{e_j \otimes e_2}{z}: j = 2, \ldots , n, \quad \hbox{and} \quad \frac{e_1 \otimes e_2}{z^2}. \label{spanningset} \end{equation}
Now $\Gr(2, E)$ is precisely the image of $\Delta$ under the projection $E \otimes E \to \wedge^2 E$. 
 Thus the embedded tangent space to $\Gr(2, E)$ at $P$ is exactly the image of $\bt_{e_1 \otimes e_2} \Delta$ under the projection $\pp H^{1}(X, E \otimes E) \dashrightarrow \pp H^{1}(X, \wedge^{2}E)$. Hence $\bt_P \Gr(2, E)$ is spanned by the cohomology classes of the antisymmetrizations of (\ref{spanningset}):
\[ \frac{e_{1} \wedge e_i}{z}, \ i = 2, \ldots , n, \quad \frac{e_{j} \wedge e_2}{z}, \ j = 3, \ldots , n, \quad \hbox{and} \quad \frac{e_{1} \wedge e_2}{z^2}. \]
But this is exactly the basis (\ref{GammatauPbasis}). 
 The lemma follows by (\ref{tauP}). \end{proof}

Recall now that Hirschowitz' lemma \cite[\S 4.6]{Hir} states that the tensor product of two general bundles is non-special. We require also the following variant:
\begin{lemma} \label{non-special}
Suppose $F \to X$ is a general stable bundle of rank $n$ and degree $e$. Then $\wedge^2 F$ is non-special; that is,
\[
\dim H^0(X, \wedge^2 F) \ = \ \begin{cases} (n-1)e - \frac{1}{2} n(n-1)(g-1) & \text{\ if \ } e > \frac{1}{2}n(g-1),\\  0  & \text{\ if \ } e \le \frac{1}{2}n(g-1).
\end{cases}
\]
\end{lemma}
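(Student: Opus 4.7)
The plan is induction on the rank $n$, degenerating $F$ to a polystable bundle of the form $L \oplus G$ with $L$ a line bundle and $G$ general stable of rank $n - 1$.

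For the base case $n = 2$, $\wedge^2 F = \det F$, and as $F$ ranges over general stable rank-$2$ bundles of degree $e$, $\det F$ ranges over a general line bundle in $\Pic^e(X)$. Classical Brill--Noether theory for line bundles then gives $h^0(\det F) = \max\{0, e - g + 1\} = \max\{0, \chi(\wedge^2 F)\}$, matching both cases of the stated formula.

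For the inductive step, I would split $e = a + b$ with $a$ and $b/(n-1)$ both equal (or as close as possible) to $e/n$, and form the family of extensions
\[ 0 \to L \to F_t \to G \to 0, \quad t \in \mathrm{Ext}^1(G, L), \]
with $L$ a generic line bundle of degree $a$ and $G$ a general stable bundle of rank $n-1$ and degree $b$. The special fiber $F_0 = L \oplus G$ is polystable, while for generic $t$ standard results make $F_t$ stable. Since $\wedge^2(L \oplus G) = (L \otimes G) \oplus \wedge^2 G$, the computation
\[ h^0(\wedge^2 F_0) \ = \ h^0(L \otimes G) + h^0(\wedge^2 G) \]
reduces matters to the two summands. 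The inductive hypothesis handles $\wedge^2 G$. For $L \otimes G$, a twist of a general stable $G$ by a generic line bundle is again a general stable bundle of its rank and degree, hence non-special by known results on Brill--Noether theory for vector bundles. With slopes matched, $\chi(L \otimes G)$, $\chi(\wedge^2 G)$ and $\chi(\wedge^2 F)$ all carry the same sign, so the two contributions add to exactly $\max\{0, \chi(\wedge^2 F)\}$.

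Applying upper semi-continuity of $h^0$ along the family $\{F_t\}$ together with the automatic lower bound $h^0 \ge \max\{0, \chi\}$ will yield the desired equality at a generic stable $F_t$, and irreducibility of the moduli space of stable bundles then extends it to a general $F$. The main obstacle, and the one requiring genuine care, is the arithmetic bookkeeping when $n \nmid e$: one must check, via a short case analysis with $a, b$ chosen as balanced as possible, that the Euler characteristics of all three pieces $L \otimes G$, $\wedge^2 G$, $\wedge^2 F$ still have the same sign. The remaining ingredients---semi-continuity, irreducibility of moduli, and Brill--Noether nonspeciality for general line and vector bundles---are standard.
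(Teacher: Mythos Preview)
Your inductive strategy via degeneration to $L \oplus G$ is different from the paper's argument, which is much shorter: for $e \le \tfrac{1}{2}n(g-1)$ one simply uses the inclusion $\wedge^2 F \subset F \otimes F$ together with the known vanishing $H^0(X, F \otimes F) = 0$ from \cite[Lemma~A.1]{CH2}, and for $e > \tfrac{1}{2}n(g-1)$ the paper refers to an argument already carried out (for $\Sym^2$) in \cite[Corollary~A.3]{CH2}. Your approach has the merit of being self-contained, but it requires more bookkeeping.

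That bookkeeping actually fails at one point. Your claim that ``a short case analysis'' handles the unbalanced degrees breaks down precisely on the boundary $e = \tfrac{1}{2}n(g-1)$ when $g$ is even. Take $n = 4$, $g = 2$, $e = 2$: with $a = \deg L$ and $b = 2 - a$, one has $\chi(L \otimes G) = 2a - 1$ and $\chi(\wedge^2 G) = 1 - 2a$, which are nonzero of opposite sign for every integer $a$. Hence $h^0(\wedge^2(L \oplus G)) \ge 1$ for every splitting, while you need it to vanish. The same obstruction occurs whenever $e = \tfrac{1}{2}n(g-1) \in \Z$ with $g$ even, since the ``balanced'' choice $a = (g-1)/2$ is then a half-integer. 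Away from this boundary your interval for $a$ does always contain an integer (one checks that $a = \lceil (g-1)/2 \rceil$ works once $e > \tfrac{1}{2}n(g-1)$, and dually for $e < \tfrac{1}{2}n(g-1)$), so the gap is confined to this single degree.

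The repair is easy: either treat the case $e \le \tfrac{1}{2}n(g-1)$ uniformly via $\wedge^2 F \subset F \otimes F$ and Hirschowitz's lemma, reserving the induction for $e > \tfrac{1}{2}n(g-1)$; or, at the boundary, degenerate instead to a direct sum of two bundles of ranks closer to $n/2$, which restores enough room to balance the Euler characteristics (e.g.\ in the example above, $M \oplus G$ with $\rank M = \rank G = 2$ and $\deg M = \deg G = 1$ gives $\chi(\det M) = \chi(M \otimes G) = \chi(\det G) = 0$).
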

\begin{proof}
If $e \le \frac{1}{2}n(g-1)$, then by \cite[Lemma A.1]{CH2} we have $\dim H^0 (X, F \otimes F) = 0$, and hence also $\dim H^0 (X, \wedge^2 F) = 0$. The other case follows by an argument practically identical to that in \cite[Corollary A.3]{CH2}.
\end{proof}

\begin{proof}[Proof of Theorem \ref{nondefectivity}] To ease notation, write $G = \Gr(2, E)$. By the Terracini lemma, the dimension of the higher secant variety $\Sec^k G$ coincides with that of the linear span of $k$ general embedded tangent spaces:
\[
\dim (\Sec^k G) \ = \ \dim \left< \bt_{P_1} G, \bt_{P_2}G, \ldots, \bt_{P_k}G \right>,
\]
where $P_1, P_2, \ldots, P_k$ are $k$ general points of $G$ supported at $x_1, x_2 , \ldots , x_k$ respectively. For $1 \le i \le k$, let $0 \to E \to F_i \to \cc^2_{x_i} \to 0$ be the elementary transformation of $E$ at the plane $P_i$. Then by Lemma \ref{tangent}, we have
\[
\bt_{P_i} G \ = \ \pp  \Ker \left[ H^1(X, \wedge^2 E) \longrightarrow H^1(X, \wedge^2 F_i) \right].
\]
Write $F$ for the elementary transformation of $E$ determined by $P_1, \ldots, P_k$. Then $F_i$ is contained in $F$ for each $i$, and the linear span $\left< \bt_{P_i} G : 1 \le i \le k \right>$ is given by 
\[
\pp \Ker \left[ H^1(X, \wedge^2 E) \longrightarrow H^1(X, \wedge^2 F) \right].
\]
Thus, to prove the theorem, we must show that
\[
\dim \Ker \left[ H^1(X, \wedge^2 E) \longrightarrow H^1(X, \wedge^2 F) \right] \ = \ \min \{ k \cdot \dim G + k, \ \dim H^1(X, \wedge^2 E) \}.
\]
Note that
\begin{equation}
k \cdot \dim G + k \ = \ k(2n-3) + k \ = \ 2k(n-1) \ = \ \deg (\wedge^2 F) - \deg (\wedge^2 E). \label{dimSeckG}
\end{equation}

Firstly, assume that $k \cdot \dim G + k  \ < \ \dim H^1(X, \wedge^2 E)$, which is equivalent to
\[ \deg F \ = \ d + 2k \ < \ \frac{n(g-1)}{2}. \]

\noindent \textbf{Claim:} For general $E$ and general $P_1 , \ldots, P_k$ in $\Gr(2, E)$, the bundle $\wedge^2 F$ is non-special in the sense of Lemma \ref{non-special}. 

By the claim, $h^0(X, \wedge^2 F) = 0$, and so
\[
\begin{split}
\dim \Ker \left[ H^1(X, \wedge^2 E) \longrightarrow H^1(X, \wedge^2 F) \right] \ &= \ \dim H^1(X, \wedge^2 F) - \dim H^1(X, \wedge^2 E) \\
&= \ \deg (\wedge^2 F) - \deg (\wedge^2 E) \\
&= \ k \cdot \dim G + k \hbox{ by (\ref{dimSeckG})}.
\end{split}
\]

On the other hand, suppose $k \cdot \dim G + k \ge \dim H^1(X, \wedge^2 E)$, so
\[ \deg F \ \ge \ \frac{1}{2}n(g-1). \]
 By the above claim and by Lemma \ref{non-special}, then,
\[
\dim H^0 (X, \wedge^2 F) \ =  \ (n-1)(d+2k) - \frac{1}{2} n(n-1)(g-1).
\]
Therefore,
\[
\begin{split}
\dim \Ker &\left[ H^1(X, \wedge^2 E) \longrightarrow  H^1(X, \wedge^2 F) \right] \\
&= \ \dim H^1(X, \wedge^2 F) - \dim H^1(X, \wedge^2 E) - \dim H^0(X, \wedge^2 F)\\
&= \ \deg (\wedge^2 F) - \deg (\wedge^2 E) - (n-1)(d+2k) + \frac{1}{2} n(n-1)(g-1)\\
&= \ -(n-1)d + \frac{1}{2} n(n-1)(g-1) \\
&= \ \dim H^1(X, \wedge^2 E).
\end{split}
\]
Thus we are done once we have proven the claim. Note that the condition in Lemma \ref{non-special} can be restated as
\begin{equation} h^0( X, \wedge^2 F) \cdot h^1 (X, \wedge^2F) = 0. \label{nonspeccond} \end{equation}
It suffices to show that there exists a stable $E$ such that some elementary transformation $F$ of $E$ of the stated form satisfies (\ref{nonspeccond}). By Lemma \ref{non-special}, we may choose an $F_0$ satisfying (\ref{nonspeccond}). Let $E_0$ be some elementary transformation of $F_0$ fitting into a sequence
\begin{equation} 0 \to E_0 \to F_0 \to \bigoplus_{i=1}^k \cc^2_{x_i} \to 0. \label{specialF} \end{equation}
Since (\ref{nonspeccond}) is an open condition on families, it holds for a general deformation
\[ 0 \to E_t \to F_t \to \tau_{t} \to 0 \]
over a small disk $T$, where $\tau_t = \bigoplus_{i=1}^k \cc^2_{x_i(t)}$ for $t \in T$. Thus for general $t \in T$, the deformation $F_t$ satisfies (\ref{nonspeccond}). Since a general deformation of $E_0$ is a general stable bundle, we are done.
\end{proof}

Theorem \ref{nondefectivity} asserts the non-defectivity of the image of $\Gr(2, E)$ in the projective space $| \pi^* \Kx \otimes \det \, \U^*|^*$, where $\U$ is the relative universal bundle over $\Gr(2, E)$. 
 It is not difficult to generalize this to certain other line bundles over $\Gr (2, E)$ restricting to $\det \, \U^*$ on each fiber:

\begin{cor} Let $E \to X$ be a general stable bundle, and suppose $L \to X$ is a general line bundle satisfying $\deg L > 1 + \mu(E)$. Then the map
\[ \Gr(2, E) \ \dashrightarrow \ |\pi^* (\Kx L^2) \otimes \det \, \U^*|^* \]
is an embedding, and the image is secant non-defective. \end{cor}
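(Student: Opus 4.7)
The plan is to reduce the corollary to the previously established Theorem \ref{nondefectivity} via the bundle substitution $E' := E \otimes L^{-1}$. Since tensoring by a line bundle preserves stability and induces an isomorphism between moduli spaces of stable bundles of the corresponding rank and degree, $E'$ is general stable whenever $E$ is; moreover, the hypothesis $\deg L > 1 + \mu(E)$ is precisely the condition $\mu(E') < -1$. Thus both hypotheses of Lemma \ref{embed} and Theorem \ref{nondefectivity} are met for $E'$.

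Next I would identify the line bundle appearing in the statement with the standard one for $E'$. Under the natural isomorphism $\Gr(2,E) \cong \Gr(2, E')$ (a $2$-plane in $E|_x$ is the same as a $2$-plane in $E'|_x$), the relative universal subbundles transform by $\U' = \U \otimes \pi^* L^{-1}$, whence
\[ \det (\U')^* \ = \ \det \U^* \otimes \pi^* L^2. \]
Consequently the line bundle $\pi^*(\Kx L^2) \otimes \det \U^*$ of the corollary coincides with $\pi^* \Kx \otimes \det(\U')^*$, which by the discussion preceding Lemma \ref{embed} is exactly the line bundle inducing the rational map $\psi' \colon \Gr(2, E') \dashrightarrow \pp H^1(X, \wedge^2 E')$.

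With this identification in hand, Lemma \ref{embed} applied to $E'$ gives that $\psi'$ is an embedding, and Theorem \ref{nondefectivity} applied to $E'$ yields the claimed non-defectivity. I do not anticipate any essential obstacle here: the proof is a pure reduction whose only real content is the twist computation $\det(\U')^* = \det \U^* \otimes \pi^* L^2$. The one point worth making explicit is that the genericity assumptions combine correctly — namely, for each fixed general $L$, the map $E \mapsto E \otimes L^{-1}$ is an isomorphism on moduli, so a general $E$ produces a general $E'$ lying in the open locus where Theorem \ref{nondefectivity} holds.
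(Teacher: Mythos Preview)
Your proposal is correct and follows essentially the same approach as the paper: both reduce to Theorem \ref{nondefectivity} by setting $E' = E \otimes L^{-1}$, checking that $\mu(E') < -1$ and that $E'$ is general, and identifying $\pi^*(\Kx L^2) \otimes \det\,\U^*$ with the standard line bundle $\pi^* \Kx \otimes \det(\U')^*$ inducing $\psi'$. If anything, your twist computation $\U' = \U \otimes \pi^* L^{-1}$ and your remark on genericity are more explicit than the paper's ``straightforward calculation''.
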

\begin{proof} Write $\pi_1$ for the projection $\pp(\wedge^2 (E \otimes L)) \to X$, and $\U_1$ for the relative universal bundle over $\Gr(2, E \otimes L)$. A straightforward calculation shows that
\[ H^0 ( \Gr(2, E) , \pi^* (\Kx L^2) \otimes \det \, \U^* ) ^* \ \cong \ H^0 ( \Gr(2, E \otimes L^{-1}) , \pi_1^* \Kx \otimes \det \, \U_1^* )^* . \]

By hypothesis, the bundle $E \otimes L^{-1}$ is general and satisfies $\mu (E \otimes L^{-1}) < -1$. Therefore, $\Gr (2, E \otimes L^{-1}) \cong \Gr (2, E) \dashrightarrow \pp H^1 (X, \wedge^2 (E \otimes L^{-1}))$ is an embedding by Lemma \ref{embed}. By Theorem \ref{nondefectivity}, the image is secant non-defective. Since $\Gr(2, E )$ is canonically isomorphic to $\Gr(2, E \otimes L^{-1})$, the corollary follows. \end{proof}

\begin{remark} \label{psinotemb} The above definitions of the secant variety $\Sec^k \Gr(2, E)$ and non-defectivity still make sense when $\psi \colon \Gr(2, E) \dashrightarrow \pp H^1 (X, \wedge^2 E)$ is only a generically finite rational map. If we assume $E$ is such that both $\psi$ and $\tilde{\phi} \colon \Delta \dashrightarrow \pp H^1 (X, E \otimes E)$ are generically finite and rational, then the proof of Theorem \ref{nondefectivity} is valid with a few minor technical modifications. \end{remark}

\section{Application to Lagrangian Segre invariants}

We return to the study of orthogonal bundles $V$ of rank $2n$. In \cite[Theorem 1.3 (1)]{CH3}, a sharp upper bound on the value of $t(V)$ was given, based on the computation of the dimensions of certain Quot schemes. In this section we use Theorem \ref{nondefectivity} together with a lifting criterion from \cite{CH3} to give a more geometric proof of this upper bound.

Recall that the second Stiefel--Whitney class $w_2(V) \in H^2 (X, \Z/2) = \Z / 2$ is the obstruction to lifting the $\mathrm{SO}_{2n} \cc$ structure on $V$ to a spin structure (see Serman \cite{S} for details). We recall another characterisation of $w_2(V)$ from \cite[Theorem 1.2 (2)]{CH3}:

\begin{theorem} \label{parity} Let $V$ be an orthogonal bundle of rank $2n$. Then $w_2 (V)$ is trivial (resp., nontrivial) if and only if all Lagrangian subbundles of $V$ have even degree (resp., odd degree). \qed \end{theorem}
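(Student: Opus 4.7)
The plan is to establish the pointwise refinement that for every Lagrangian subbundle $E \subset V$ one has $w_2(V) \equiv \deg E \pmod 2$. This yields both directions of the theorem at once (assuming, as is automatic on a curve, the existence of at least one Lagrangian subbundle) and as a by-product the non-obvious fact that all Lagrangian subbundles of a fixed $V$ share the same parity of degree.

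Fix such an $E \subset V$. By \cite[Criterion 2.1]{Hit1} the orthogonal extension
\[ 0 \to E \to V \to E^* \to 0 \]
is classified by an element of $H^1(X, \wedge^2 E)$. As the extension class ranges over the connected vector space $H^1(X, \wedge^2 E)$, one obtains a flat family of $\mathrm{SO}_{2n}\cc$-bundles containing both $V$ and the split bundle $V_0 = E \oplus E^*$ equipped with the standard hyperbolic form. Since $w_2$ is a topological invariant and hence locally constant on flat families of holomorphic bundles, this yields $w_2(V) = w_2(V_0)$, reducing the problem to the split case.

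To compute $w_2(V_0)$, observe that the hyperbolic form on $E \oplus E^*$ is preserved by
\[ \iota \colon \mathrm{GL}_n \cc \to \mathrm{SO}_{2n} \cc, \quad A \mapsto \begin{pmatrix} A & 0 \\ 0 & (A^t)^{-1} \end{pmatrix}, \]
so the frame bundle of $E$ furnishes a reduction of $V_0$ to $\mathrm{GL}_n \cc$. The pullback of the spin double cover $\mathrm{Spin}_{2n} \cc \to \mathrm{SO}_{2n} \cc$ along $\iota$ is the nontrivial double cover
\[ \widetilde{\mathrm{GL}}_n \cc \ = \ \{(A, \lambda) \in \mathrm{GL}_n \cc \times \cc^* : \det A = \lambda^2\} \ \to \ \mathrm{GL}_n \cc. \]
Hence a spin structure on $V_0$ amounts to lifting the $\mathrm{GL}_n \cc$-bundle $E$ to $\widetilde{\mathrm{GL}}_n \cc$, equivalently to the existence of a line bundle $M$ with $M^{\otimes 2} \cong \det E$. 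Such an $M$ exists on $X$ if and only if $\deg E$ is even, giving $w_2(V_0) \equiv \deg E \pmod 2$ as required.

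The principal obstacle is the Lie-theoretic identification of the pullback of the spin cover with the metalinear group $\widetilde{\mathrm{GL}}_n \cc$; once this input is in hand the remainder of the argument is essentially formal. A cleanly parallel alternative reduces further to the compact real form $\mathrm{U}(n) \subset \mathrm{SO}_{2n} \cc$, where the conclusion follows directly from the classical identity $w_2(F_{\mathbb{R}}) \equiv c_1(F) \pmod 2$ applied to $F = E$.
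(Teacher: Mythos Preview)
Your argument is correct. The paper itself does not prove this statement: it is quoted as \cite[Theorem 1.2 (2)]{CH3} and marked with a \qed, so there is no proof here to compare against line by line. What you have supplied is a clean, self-contained proof that the paper omits.

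Your route---deforming along the affine space $H^1(X,\wedge^2 E)$ to the split hyperbolic bundle $E \oplus E^*$, then reading off $w_2$ from the induced $\mathrm{GL}_n\cc$-reduction via the identity $w_2 \equiv c_1 \bmod 2$---is the natural one and is essentially the argument one finds (in various guises) in the literature. The one point worth making explicit for a reader is that the map $\iota_* \colon \pi_1(\mathrm{GL}_n\cc) \to \pi_1(\mathrm{SO}_{2n}\cc)$ is the surjection $\Z \twoheadrightarrow \Z/2$; this is what guarantees that the pulled-back cover is the connected metalinear group rather than the trivial double cover, and it is exactly equivalent to the classical fact $w_2(F_{\mathbb R}) \equiv c_1(F) \bmod 2$ you invoke at the end. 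Your parenthetical about the existence of a Lagrangian subbundle on a curve is also fine and is used elsewhere in the paper (see the proof of Theorem \ref{upperbound}).
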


\noindent The link between the situation of Theorem \ref{nondefectivity} and the invariant $t(V)$ is given by the following:
\begin{prop} \label{secantSegre} \begin{enumerate}
\renewcommand{\labelenumi}{(\arabic{enumi})}
\item The subspace $H^{1}(X, \wedge^2 E)$ of $H^1 (X, E \otimes E)$ parameterizes extensions $0 \to E \to V \to E^* \to 0$ admitting an orthogonal structure with respect to which $E$ is Lagrangian. 
\item Let $0 \to E \to V \to E^* \to 0$ be an orthogonal extension with class $[V] \in H^1 (X, \wedge^2 E)$. Then some elementary transformation $F$ of $E^*$ satisfying $\deg(E^* / F) \le 2k$ lifts to a Lagrangian subbundle of $V$ if and only if $[V] \in \Sec^k \Gr(2, E)$. In this case, $\deg E \equiv \deg F \mod 2$.
\item If $[V] \in \Sec^k \Gr(2, E)$, then $t(V) \le 2(2k + \deg E)$.
\end{enumerate} \end{prop}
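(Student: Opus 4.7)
Part (1) is \cite[Criterion 2.1]{Hit1}, already noted in Section 2.

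For part (2), the plan is to apply a lifting criterion from \cite[\S 5]{CH3}. I expect this criterion to characterize Lagrangian lifts as follows: an elementary transformation $F \subset E^*$ with $\deg(E^*/F) = 2m$ lifts to a Lagrangian subbundle of $V$ if and only if the class $[V] \in H^1(X, \wedge^2 E)$ admits a \v{C}ech representation
\[
[V] \ = \ \sum_{i=1}^m c_i \left[ \frac{e_1^{(i)} \wedge e_2^{(i)}}{z_i} \right],
\]
where the 2-planes $P_i = \langle e_1^{(i)}(x_i), e_2^{(i)}(x_i) \rangle \subset E|_{x_i}$ are those determining the dual elementary transformation $\tilde F = F^*$ of $E$. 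Since a principal part of the form $\frac{e_1 \wedge e_2}{z}$ represents precisely the Pl\"ucker class $\delta_P$ associated to the point $P = \langle e_1(x), e_2(x) \rangle \in \Gr(2, E) \subset \pp H^1(X, \wedge^2 E)$ under $\psi$, this criterion directly translates ``there exists $F$ with $\deg(E^*/F) \le 2k$ lifting Lagrangianly'' into ``there exist points $P_1, \ldots, P_k \in \Gr(2, E)$ with $[V] \in \langle \delta_{P_1}, \ldots, \delta_{P_k} \rangle$'', that is, $[V] \in \Sec^k \Gr(2, E)$. Both directions are then immediate.

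The parity $\deg E \equiv \deg F \pmod 2$ follows from Theorem \ref{parity}: the Lagrangian lift $\tilde L$ of $F$ satisfies $\tilde L \cong F$ as a vector bundle, so $\deg F = \deg \tilde L$, and since $E$ and $\tilde L$ are both Lagrangian subbundles of $V$, their degrees have the same parity (determined by $w_2(V)$). For part (3), combine (2) with the definition of $t(V)$: the Lagrangian lift $\tilde L$ of $F$ has $\deg \tilde L = -\deg E - \deg(E^*/F) \ge -\deg E - 2k$, so $t(V) \le -2 \deg \tilde L \le 2(2k + \deg E)$.

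The main content of (2) resides in the lifting criterion of \cite{CH3}; once stated in the above form, our task reduces to a translation between the \v{C}ech description of $H^1(X, \wedge^2 E)$ and the Pl\"ucker embedding $\psi$ of Section 2. The potential obstacle is ensuring that the criterion from \cite{CH3} yields precisely this decomposable form rather than only the weaker kernel condition $[V] \in \pp \Ker [H^1(X,\wedge^2 E) \to H^1(X,\wedge^2 \tilde F)]$; in the latter case, a supplementary argument using Theorem \ref{nondefectivity} together with dimension counting on $\Sec^k \Gr(2, E)$ would be needed to conclude $[V] \in \Sec^k \Gr(2, E)$.
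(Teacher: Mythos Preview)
Your approach is essentially the same as the paper's: part (1) is \cite[Criterion 2.1]{Hit1}, part (2) is a direct citation of the lifting criterion in \cite{CH3}, and part (3) follows immediately from (2) and the definition of $t(V)$. Two small corrections: the relevant criterion is \cite[Criterion 2.2 (2)]{CH3} (in \S 2, not \S 5), and your final worry is unnecessary---that criterion already gives the secant-variety characterization directly, so no supplementary argument via Theorem \ref{nondefectivity} is needed.
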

\begin{proof} (1) follows from \cite[Criterion 2.1]{Hit1}. Statement (2) is \cite[Criterion 2.2 (2)]{CH3}, and (3) is immediate from (2) and the definition of $t(V)$. \end{proof}
%
Now we can derive the upper bound on $t(V)$:

\begin{theorem} \label{upperbound} Let $V$ be an orthogonal bundle of rank $2n$. If $w_2 (V)$ is trivial (resp., nontrivial), then $t(V) \le n(g-1) + \varepsilon$, where $\varepsilon \in \{0, 1, 2, 3 \}$ is such that $n(g-1) + \varepsilon \equiv 0 \mod 4$ (resp., $n(g-1) + \varepsilon \equiv 2 \mod 4$). \end{theorem}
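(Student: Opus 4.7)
My plan is to mirror the strategy used in \cite[\S 5]{CH1} and \cite[Theorem 1.4]{CH2}: combine Theorem \ref{nondefectivity} with Proposition \ref{secantSegre} and a semi-continuity argument on the moduli of orthogonal bundles.

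First I would fix integers $d$ and $k \ge 1$ subject to the following three conditions.
(i) The parity of $d$ matches that dictated by $w_2(V)$ via Theorem \ref{parity} (even if $w_2$ is trivial, odd if nontrivial);
(ii) $d < -n$, so that Theorem \ref{nondefectivity} applies to a general stable bundle of rank $n$ and degree $d$; and
(iii) $d + 2k = \tfrac{1}{2}(n(g-1) + \varepsilon)$.
Note that $n(g-1) + \varepsilon$ is congruent to $0$ or $2$ modulo $4$, so $(n(g-1)+\varepsilon)/2$ is an integer whose parity matches (i), making (iii) compatible with (i). The equation in (iii) is preserved by the shift $(d, k) \mapsto (d-2, k+1)$, so I can always satisfy (ii) by taking $k$ sufficiently large.

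Next, let $E$ be a general stable bundle of rank $n$ and degree $d$. Condition (iii) gives $d + 2k \ge \tfrac{1}{2} n(g-1)$, which by the computation in (\ref{dimSeckG}) and the proof of Theorem \ref{nondefectivity} is precisely the threshold past which $\Sec^k \Gr(2,E) = \pp H^1(X, \wedge^2 E)$. Therefore every orthogonal extension $0 \to E \to V \to E^* \to 0$ has its class in $\Sec^k \Gr(2,E)$. By Proposition \ref{secantSegre}(2), such a $V$ admits a Lagrangian subbundle $\widetilde{F}$ lifting an elementary transformation $F \subset E^*$ with $\deg(E^*/F) \le 2k$, so $\deg \widetilde{F} \ge -d - 2k$. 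Proposition \ref{secantSegre}(3) then yields $t(V) \le 2(d + 2k) = n(g-1) + \varepsilon$.

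Finally, to promote this bound from extensions of $E^*$ by our fixed $E$ to every orthogonal bundle with the prescribed $w_2$, I would invoke upper semi-continuity of the maximum degree of a Lagrangian subbundle on the moduli space of rank-$2n$ orthogonal bundles: for each integer $c$, the locus of $V$ admitting a Lagrangian subbundle of degree at least $c$ is closed, as the image of a suitable proper orthogonal Quot-type scheme. A standard dimension count shows that as $E$ varies over general stable bundles of rank $n$ and degree $d$ (with $d$ sufficiently negative), the resulting family of orthogonal extensions dominates the relevant $w_2$-component of the moduli space. The closed locus then contains a dense subset and hence equals the whole component, giving $t(V) \le n(g-1) + \varepsilon$ for every $V$ with the prescribed $w_2$.

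The hard part is the last step rather than the secant-theoretic bound itself: one must verify that the family of orthogonal extensions really does dominate the appropriate component of the moduli of orthogonal bundles. This is a routine dimension/irreducibility check in the classical and symplectic settings of \cite{CH1, CH2}, and in the orthogonal case it should follow from the extensional presentation set up in \cite[\S 2 and \S 5]{CH3}, but it is where the technical care lies.
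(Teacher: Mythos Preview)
Your argument is correct in outline, but the route differs from the paper's in precisely the place you flag as ``the hard part''. You start from a general stable $E$ of the right degree, use non-defectivity to bound $t(V)$ for every orthogonal extension of $E^*$ by $E$, and then need to show that this family of extensions dominates the $w_2$-component of the moduli space. As you note, this last step requires an extra dimension/irreducibility check (essentially the content of \cite[\S 5]{CH3} sketched in Remark \ref{quotarg}), which you invoke but do not carry out.

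The paper sidesteps the domination statement entirely by reversing the order: it begins with a \emph{general} orthogonal bundle $V$ and constructs a Lagrangian subbundle $F \subset V$ by choosing one over a trivializing open set and extending. After a small deformation, $F$ may be assumed general as a vector bundle, so $V$ already sits in some $H^1(X, \wedge^2 F)$ to which Theorem \ref{nondefectivity} applies. Non-defectivity then gives the smallest $k$ with $\Sec^k \Gr(2,F) = \pp H^1(X, \wedge^2 F)$, and Proposition \ref{secantSegre}(3) yields the bound for this particular general $V$; semicontinuity finishes. The trade-off: the paper's approach is self-contained and avoids importing the classifying-map argument from \cite{CH3}, at the cost of the mild ``deforming if necessary, $F$ is general'' step; your approach is cleaner on the secant side (you fix $d$ and $k$ numerically from the start) but outsources the genuinely nontrivial surjectivity of the classifying map.
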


\begin{proof} Suppose $V$ is a general orthogonal bundle of rank $2n$ with $w_2(V)$ trivial. Firstly, we show that $V$ has a Lagrangian subbundle $F$ which is general as a vector bundle. We adapt the argument for symplectic bundles in \cite[Lemma 3.2]{CH2}: Choose an open set $U \subset X$ over which $V$ is trivial. By linear algebra, we may choose a Lagrangian subbundle $\tilde{F}$ of $V|_U$.  
Since $X$ is of dimension one, we may extend $\tilde{F}$ uniquely to a Lagrangian subbundle $F \subset V$. Deforming if necessary, we may assume $F$ is general as a vector bundle. By Proposition \ref{secantSegre} (1), the bundle $V$ is represented in the extension space $H^1 (X, \wedge^2 F)$. Moreover, $\deg F$ is even by Theorem \ref{parity}. 

We now compute the smallest value of $k$ for which $\Sec^k \Gr(2, F)$ sweeps out the whole of $\pp H^1 (X, \wedge^2 F )$, and hence must contain $[V]$. By the non-defectivity of $\Sec^k \Gr(2, F)$ proven in Theorem \ref{nondefectivity}, we have
\[ \dim \left( \Sec^k \Gr (2, F) \right) \ = \ \min \left\{ k (\dim \Gr(2, F) + 1) - 1, h^1 (X, \wedge^2 F) - 1 \right\}. \]
Therefore, the number $k$ we require is the smallest integral solution to the inequality
\[ k (\dim \Gr(2, F) + 1) - 1 \ \ge \ h^1 (X, \wedge^2 F) - 1. \]
Computing, we obtain
 $2k \ \ge \ -\deg F + \frac{1}{2}n(g-1)$. Since $\deg F$ is even, we have
\[ 2k + \deg F \ = \ \frac{1}{2} \left( n(g-1) + \varepsilon \right) \]
where $\varepsilon \in \{ 0, 1, 2, 3 \}$ is such that $n(g-1) + \varepsilon \equiv 0 \mod 4$. By Proposition \ref{secantSegre} (3), we obtain $t(V) \le n(g-1) + \varepsilon$ as required. Since $V$ was chosen to be general, the bound is valid for all $V$ by semicontinuity.

The case where $w_2(V)$ is nontrivial is proven similarly.
%
\end{proof}

\begin{remark} \label{quotarg} The above result was proven by a different method in \cite[\S 5]{CH3}, which we outline here for comparison. Consider firstly bundles with trivial $w_2$. For each even number $e \ge 0$, one constructs a family of extension spaces of the form $\pp H^1 (X, \wedge^2 E)$ with $\deg E = -e$, admitting a classifying map to the moduli space ${\mathcal MO}_{2n}^+$ of semistable orthogonal bundles of rank $2n$ over $X$ with trivial $w_2$. The fiber over a stable $V \in {\mathcal MO}_{2n}^+$ is identified with a Quot-type scheme of degree $-e$ Lagrangian subbundles of $V$. Computing the dimension of this Quot scheme, one sees that for $\varepsilon \in \{ 0, 1, 2, 3 \}$ such that $n(g-1) + \varepsilon \equiv 0 \mod 4$, the classifying map corresponding to $e = \frac{1}{2}(n(g-1) + \varepsilon)$ dominates ${\mathcal MO}_{2n}^+$. Thus $t(V) \le n(g-1) + \varepsilon$ for a general $V$ with $w_2(V)$ trivial, and hence for all $V$ by semicontinuity. A similar method works for bundles with nontrivial $w_2$, taking $e$ to be odd instead of even.
\end {remark}

\begin{remark} In Theorem \ref{upperbound}, the secant geometry of $\Gr(2, E) \subset \pp H^1 (X, \wedge^2 E)$ is applied in the ``opposite'' sense to that in \cite[\S 5]{CH3}. In Theorem \ref{upperbound}, the density of $\Sec^k \Gr(2, F)$ in $\pp H^1 (X, \wedge^2 F)$ is used to produce a \emph{lower} bound on the degrees of maximal Lagrangian subbundles of a general orthogonal extension $0 \to E \to V \to E^* \to 0$. On the other hand, in \cite[Theorem 5.2]{CH3}, the fact that certain $\Sec^k \Gr(2, E)$ are \emph{not} dense in their respective $\pp H^1 (X, \wedge^2 E)$ is used to give \emph{upper} bounds on the degrees of maximal Lagrangian subbundles of the corresponding extensions. \end{remark}

\section{Intersection of maximal Lagrangian subbundles}

Lange and Newstead showed in \cite[Proposition 2.4]{LaNe} that if $W$ is a generic vector bundle of rank $r$ and if $k \leq r/2$, then two maximal subbundles of rank $k$ in $W$ intersect generically in rank zero. The analogous statement for maximal Lagrangian subbundles of a generic symplectic bundle was proven in \cite[Theorem 4.1 (3)]{CH3}. However, as noted in \cite[Remark 5.1]{CH3}, the corresponding approach in the orthogonal case does not exclude the possibility that two maximal Lagrangian subbundles intersect in a line bundle. 
 In this section, we use the non-defectivity statement of Theorem \ref{nondefectivity} to show that this (somewhat unexpected) situation in fact arises for a general orthogonal bundle of even rank.

Firstly, we make precise the statement of \cite[Remark 5.1]{CH3}. 

\begin{prop} Suppose $X$ has genus $g \ge 5$, and $n \ge 2$. Let $V \to X$ be a general orthogonal bundle of rank $2n$. Then the generic rank of the intersection of any two maximal Lagrangian subbundles of $V$ is at most $1$. \label{dimcount}
\end{prop}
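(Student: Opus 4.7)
My plan is to prove the proposition by a dimension count in $M := \MO_{2n}$, separated by the parity of $w_2$ where necessary. For each integer $s \in \{2, \ldots, n-1\}$, define $\mathcal{W}_s \subset M$ as the locus of bundles $V$ admitting two maximal Lagrangian subbundles $E, F$ with $\rank(E \cap F) \ge s$. Showing that $\mathcal{W}_2$ is a proper closed subvariety of $M$ gives the proposition. The case $n = 2$ is vacuous, since two rank-$2$ Lagrangians in a rank-$4$ bundle intersect in rank $\ge 2$ only by coinciding, so I assume $n \ge 3$.

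To bound $\dim \mathcal{W}_s$, I would parameterize triples $(V, E, F) \in \mathcal{W}_s$ by fixing a generic $E$ of degree $d$ as prescribed by Theorem \ref{upperbound}, viewing $V$ as an extension class $[V] \in \pp H^1(X, \wedge^2 E)$ (the classifying map being dominant by Remark \ref{quotarg}), and specifying $F$ by the subsheaf $S := E \cap F \subset E$ of rank $s$ together with the saturation $\bar F \subseteq E^*$ of the image of $F$ in $V/E = E^*$, of rank $n - s$. Pulling the extension back along $\bar F \hookrightarrow E^*$ and passing modulo $S$, the existence of a subbundle $F$ with $F \cap E = S$ and projection $\bar F$ is equivalent to the splitting of the induced sub-extension
\[ 0 \to E/S \to V_{\bar F}/S \to \bar F \to 0, \]
whose class lies in $H^1(X, (E/S) \otimes \bar F^*)$; the requirement that $F$ be Lagrangian imposes a further compatibility with the orthogonal form on $V$, analyzable via the framework of \S 2 and \cite[\S 2]{CH3}.

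Finally, I would sum dimension contributions: the moduli of $E$, the Quot schemes parameterizing $(S, \bar F)$ as subbundles of $E$ and $E^*$, and, for each $(S, \bar F)$, the codimension imposed on $[V]$ by the splitting-plus-Lagrangian conditions. The degree of $S$ is essentially pinned down to $-\tfrac{1}{2} s(g-1) + O(1)$: above by stability of $E$, below by applying Theorem \ref{upperbound} to the orthogonal quotient $S^\perp/S$ of rank $2(n-s)$. A direct arithmetic comparison should then show the $(g-1)$-leading coefficient of the upper bound on $\dim \mathcal{W}_s$ is strictly smaller than $n(2n - 1)$, the corresponding coefficient of $\dim M = n(2n-1)(g-1)$; the hypothesis $g \ge 5$ enters to absorb the $O(1)$ corrections coming from the $\varepsilon$'s in Theorem \ref{upperbound} and the Riemann-Roch terms. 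I expect the main obstacle to be (i) computing the precise codimension imposed by the splitting-plus-Lagrangian conditions, so that it genuinely compensates for the Quot scheme dimensions, and (ii) maintaining the lower bound on $\deg S$ when $S^\perp/S$ fails to be generic in its moduli, which may require a direct semistability argument in place of Theorem \ref{upperbound}.
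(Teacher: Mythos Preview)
Your approach is essentially the same as the paper's --- a dimension count inside the extension space $\pp H^1(X,\wedge^2 E)$ --- but you are missing the key simplification that makes the count tractable, and as a result you have set yourself up to re-derive machinery that the paper simply quotes from \cite{CH3}.

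The main point is this. If $F$ is Lagrangian with $S := E\cap F$ of rank $r$, then $F \subset S^\perp$, so the image $\bar F$ of $F$ in $E^* = V/E$ automatically lands in $(E/S)^* \subset E^*$ and has \emph{full} rank $n-r$ there; it is an elementary transformation of $(E/S)^*$, not an arbitrary rank-$(n-r)$ subsheaf of $E^*$. Moreover, $F/S$ is a Lagrangian subbundle of the orthogonal bundle $S^\perp/S$, so by Proposition~\ref{secantSegre}(2) the existence of such an $F$ with $\deg F \ge -e$ is \emph{exactly} the condition that $q_*\,{}^t q^*[V] \in \Sec^{e-h}\Gr(2,E/S)$, where $-h = \deg S$ and $q\colon E \to E/S$. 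This single secant condition replaces both your splitting condition and your ``further compatibility''; there is no need to parameterize $\bar F$ separately. The dimension of the locus of such $[V]$ is then bounded by the formula
\[
e(n-r) - (e-h)(r+1) - \tfrac12(n-r)(n+r-1)(g-1) + h^1(X,\wedge^2 E),
\]
already established in Step~1 of the proof of \cite[Theorem 5.1]{CH3}. Comparing with $h^1(X,\wedge^2 E)$ gives the inequality $\varepsilon < (r-1)(g-1)$, which holds for $r \ge 2$ once $g \ge 5$.

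Two further remarks. First, the only degree constraint on $S$ you need is $e - h \ge 0$, which falls out of the lifting criterion itself; your plan to bound $\deg S$ below via Theorem~\ref{upperbound} applied to $S^\perp/S$ runs into exactly the genericity problem you flag in~(ii), and is unnecessary. Second, the case $r = n-1$ (so $\wedge^2(E/S) = 0$) is genuinely different: there is no secant variety, $S^\perp/S$ splits, and one must instead use the Hirschowitz bound on subbundles of a general $E$ to exclude $\deg S \ge -e$. Your sketch does not anticipate this bifurcation.
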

\begin{proof} Let $E$ be a general bundle of degree $-e := -\frac{1}{2}(n(g-1) + \varepsilon)$, where $\varepsilon \in \{0, 1, 2, 3\}$ is determined as in Theorem \ref{upperbound} by $n$ and $g$ together with a choice of Stiefel--Whitney class $w_2$. We consider orthogonal extensions $0 \to E \to V \to E^* \to 0$, which by Lemma \ref{secantSegre} (1) are parameterized by $H^1 (X, \wedge^2 E)$.

Suppose $H \subset E$ is a subbundle of rank $r$ and degree $-h$, and write $q \colon E \to E/H$ for the quotient map. Then by the proof of \cite[Criterion 2.3 (2)]{CH3} the bundle $H^\perp / H$ is an orthogonal extension
\[ 0 \to \frac{E}{H} \to \frac{H^\perp}{H} \to \left( \frac{E}{H} \right)^* \to 0 \]
with class $q_* {^tq^*}[V] \in H^1 (X, \wedge^2 (E/H))$. Furthermore, $V$ admits a Lagrangian subbundle $F$ of degree $-f \ge -e$ fitting into a diagram
\[ \xymatrix{ 0 \ar[r] & E \ar[r] & V \ar[r] & E^* \ar[r] & 0 \\
 0 \ar[r] & H \ar[u] \ar[r] & F \ar[u] \ar[r] & F/H \ar[u] \ar[r] & 0 } \]
if and only if
\begin{equation} e-h \ \ge \ 0 \label{emhpos} \end{equation}
and $\left[ H^\perp / H \right] = q_* {^tq^*}[V]$ belongs to the secant variety
\[ \Sec^{e-h} \Gr (2, E/H) \ \subseteq \ \pp H^1 \left(X, \wedge^2 (E/H) \right). \]
By Step 1 of the proof of \cite[Theorem 5.1]{CH3}, the locus of those $V$ in $H^1 (X, \wedge^2 E)$ admitting some such configuration of isotropic subbundles $F$ and $H$ has dimension at most
\begin{equation} e(n - r) - (e - h)(r + 1) - \frac{1}{2}(n - r)(n + r - 1)(g - 1) + h^1 (X, \wedge^2 E). \label{bigdim} \end{equation}

Suppose firstly that $r \le n-2$, so that $H^1 (X, \wedge^2 (E/H))$ is nonzero. Since $e-h \ge 0$, the above dimension is strictly smaller than $h^1 (X, \wedge^2 E)$ if
\[ \varepsilon \ < \ (r-1)(g-1). \]
Since $0 \le \varepsilon \le 3$, this is satisfied for all $g \ge 5$ if $r \ge 2$.

If $2 \le r = n-1$, then $E/H$ is a line bundle, so $h^1 (X, \wedge^2 (E/H)) = 0$. Thus
\[ \frac{H^\perp}{H} \ \cong \ \frac{E}{H} \oplus \left( \frac{E}{H} \right)^* \]
and the inverse image $F$ of $(E/H)^*$ is a Lagrangian subbundle of $V$, of degree
\[ \deg H + \deg (E/H)^* \ = \ 
 e - 2h. \]
Since $V$ is general, $\deg F = e - 2h \le -e$, so $e-h \le 0$. In view of (\ref{emhpos}), therefore, $e = h$. We claim that a general bundle $E$ of rank $n$ and degree $-e$ has no rank $n-1$ subbundle of degree $\ge -e$. By Hirschowitz \cite[Th\'eor\`eme 4.4]{Hir}, we have
\[ (n-1) \deg E - n \deg H \ = \ e \ = \ \frac{1}{2} \left( n(g-1) + \varepsilon \right) \ \ge \ (n-1)(g-1) \]
since $E$ is general. Thus we can have $\deg H = -e$ only if $\varepsilon \ge (n-2)(g-1)$. 
 But this is excluded for $g \ge 5$ since $\varepsilon \le 3$ and $n = r+1 \ge 3$ by hypothesis. \end{proof}

Next, we will need the following result from Reid \cite[\S 1]{Reid} on the even orthogonal Grassmannian $\OG(n,2n)$ parameterizing Lagrangian subspaces of $\cc^{2n}$:

\begin{prop} \label{OG} The space $\OG(n, 2n)$ consists of two disjoint, irreducible and mutually isomorphic components. Two Lagrangian subspaces $\mathsf{F}_1$ and $\mathsf{F}_2$ belong to the same component if and only if $\dim \left( \mathsf{F}_1 \cap \mathsf{F}_2 \right) \equiv n \mod 2$. \qed \end{prop}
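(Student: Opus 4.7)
The plan is to establish the proposition in three steps: a local parametrization showing that intersection dimension with a fixed Lagrangian has fixed parity within each affine chart, an extension to local constancy over all of $\OG(n,2n)$, and the production of two Lagrangians in distinct parity classes together with a group-theoretic argument.

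First I would fix $\mathsf{F}_0$ and choose a complementary Lagrangian $\mathsf{F}_\infty$, using the symmetric form to identify $\mathsf{F}_\infty \cong \mathsf{F}_0^*$. The affine open $U_{\mathsf{F}_\infty} = \{\mathsf{F} \in \OG(n, 2n) : \mathsf{F} \cap \mathsf{F}_\infty = 0\}$ is parametrized by antisymmetric maps $A \colon \mathsf{F}_0 \to \mathsf{F}_\infty$ via $A \mapsto \Gamma(A) = \{v + Av : v \in \mathsf{F}_0\}$ (contrast this with the symplectic case, where $A$ must be symmetric), identifying $U_{\mathsf{F}_\infty}$ with $\wedge^2 \mathsf{F}_0 \cong \cc^{\binom{n}{2}}$. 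Since $\Gamma(A) \cap \mathsf{F}_0 = \ker A$ and every antisymmetric matrix over $\cc$ has even rank, each $\mathsf{F}$ in this chart satisfies $\dim(\mathsf{F} \cap \mathsf{F}_0) \equiv n \pmod{2}$. Varying $\mathsf{F}_\infty$ also shows $\OG(n,2n)$ is smooth of pure dimension $\binom{n}{2}$.

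Next I would repeat this analysis in a chart centered at an arbitrary $\mathsf{F}' \in \OG(n,2n)$ with $k := \dim(\mathsf{F}_0 \cap \mathsf{F}')$. Choose a hyperbolic basis $e_1, \ldots, e_n, f_1, \ldots, f_n$ of $\cc^{2n}$ adapted to both $\mathsf{F}_0$ and $\mathsf{F}'$, so that $\mathsf{F}_0 = \langle e_1, \ldots, e_n \rangle$, $\mathsf{F}' = \langle e_1, \ldots, e_k, f_{k+1}, \ldots, f_n \rangle$, and $K := \mathsf{F}_0 \cap \mathsf{F}' = \langle e_1, \ldots, e_k \rangle$. Parametrize nearby Lagrangians as graphs $\Gamma(A)$ over $\mathsf{F}'$ in the complement $\mathsf{F}'_\infty := \langle f_1, \ldots, f_k, e_{k+1}, \ldots, e_n \rangle$, where $A \colon \mathsf{F}' \to \mathsf{F}'_\infty$ is antisymmetric. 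A direct coordinate calculation gives $\dim(\Gamma(A) \cap \mathsf{F}_0) = k - \rho$, where $\rho$ is the rank of the $k \times k$ principal block $A_{KK}$ of $A$ corresponding to $K$. Since $A_{KK}$ is a principal submatrix of an antisymmetric matrix, it is itself antisymmetric, so $\rho$ is even. Hence $\dim(\Gamma(A) \cap \mathsf{F}_0) \equiv k \equiv \dim(\mathsf{F}' \cap \mathsf{F}_0) \pmod{2}$ throughout the chart around $\mathsf{F}'$, so the function $\mathsf{F} \mapsto \dim(\mathsf{F} \cap \mathsf{F}_0) \bmod 2$ is locally constant on $\OG(n, 2n)$. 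In particular $\OG(n,2n)$ has at most two connected components, distinguished by this parity.

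Finally I would produce a Lagrangian $\mathsf{F}_1$ in the opposite parity class: pick any hyperplane $H \subset \mathsf{F}_0$, note that $H^\perp/H$ is a $2$-dimensional nondegenerate orthogonal space over $\cc$ with exactly two isotropic lines, one of which is $\mathsf{F}_0/H$; the preimage in $H^\perp$ of the other gives a Lagrangian $\mathsf{F}_1$ with $\mathsf{F}_0 \cap \mathsf{F}_1 = H$ and hence $\dim(\mathsf{F}_0 \cap \mathsf{F}_1) = n - 1$, of opposite parity. So both components are nonempty. Witt's extension theorem gives a transitive action of $\mathrm{SO}(2n, \cc)$ on each parity class (the parity is preserved since $\mathrm{SO}(2n,\cc)$ is connected and the parity is locally constant, hence isometry-invariant), making each component a homogeneous space, hence smooth and irreducible of dimension $\binom{n}{2}$. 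Any reflection in a non-isotropic vector lies in $\mathrm{O}(2n, \cc) \setminus \mathrm{SO}(2n, \cc)$ and interchanges the two components, yielding the asserted isomorphism.

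The main obstacle is the coordinate calculation in the second step: setting up a hyperbolic basis adapted to both $\mathsf{F}_0$ and $\mathsf{F}'$ and choosing the complement $\mathsf{F}'_\infty$ so as to make the relevant submatrix of $A$ manifestly antisymmetric requires some care, but once this is done the parity invariance drops out cleanly. A more conceptual but less elementary alternative is to invoke the half-spin representations of $\mathrm{Spin}(2n,\cc)$, where the two components of $\OG(n,2n)$ appear as $\mathrm{SO}(2n,\cc)$-orbits of highest-weight lines in $S^+$ and $S^-$ via the Cartan embedding.
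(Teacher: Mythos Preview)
Your argument is correct. The parametrization of Lagrangians transverse to a fixed Lagrangian by antisymmetric matrices, together with the fact that such matrices have even rank, is exactly the right mechanism; your adapted-basis computation in the second step is accurate (the relevant block $A_{KK}$ is indeed a principal minor of an antisymmetric matrix, hence itself antisymmetric), and the construction of a Lagrangian meeting $\mathsf{F}_0$ in a hyperplane via the two isotropic lines of $H^\perp/H$ is clean. One small point: your appeal to Witt's theorem for transitivity of $\mathrm{SO}(2n,\cc)$ on each parity class is slightly compressed---Witt gives transitivity of $\mathrm{O}(2n,\cc)$ on all Lagrangians, and you then need that any element of determinant $-1$ swaps the parity classes (which follows from your reflection example together with connectedness of the non-identity coset). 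With that filled in, the argument is complete.

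As for comparison with the paper: there is nothing to compare. The paper does not prove this proposition; it is stated with a terminal \qed\ and attributed to Reid's thesis \cite[\S 1]{Reid}. Your write-up therefore supplies a self-contained proof where the paper gives only a citation. The route you take---affine charts via graphs of antisymmetric maps, parity invariance from even rank, and the $\mathrm{O}/\mathrm{SO}$ dichotomy---is the standard elementary approach and essentially what one finds in Reid. The spinorial alternative you mention at the end (the two components as closed $\mathrm{SO}(2n,\cc)$-orbits in $\pp S^\pm$) is also classical and gives the irreducibility and mutual isomorphism for free, at the cost of invoking more machinery.
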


In the same way, if $V$ is an orthogonal bundle of rank $2n$, then there is a Lagrangian Grassmannian bundle $\OG(n, V) \subset \Gr(n, V)$, also with two connected components. 
 A Lagrangian subbundle of $V$ corresponds to a section $X \to \OG(n, V)$. We write $\rank ( E_1 \cap E_2 )$ for the dimension of the intersection of $E_1$ and $E_2$ at a general point of $X$. The subbundles $E_1$ and $E_2$ belong to the same component of $\OG(n, V)$ if and only if $\rank (E_1 \cap E_2) \equiv n \mod 2$.


Now we come to the main result of this section. Fix $w_2 \in H^2 (X, \Z / 2)$. By Theorem \ref{upperbound}, a general orthogonal bundle $V$ of rank $2n$ with $w_2(V) = w_2$ satisfies $t(V) = n(g-1) + \varepsilon$ where $0 \le \varepsilon \le 3$ is determined as above by $n$, $g$ and $w_2$.

\begin{theorem} \label{intersection} Assume that $g \ge 5$ and $n \ge 2$. Let $V$ be a general orthogonal bundle as above. Let $E$ be a maximal Lagrangian subbundle which is a general vector bundle of rank $n$ and degree $-e := -\frac{1}{2}(n(g-1) + \varepsilon)$.
\begin{enumerate}
\renewcommand{\labelenumi}{(\arabic{enumi})}
\item There exist maximal Lagrangian subbundles $\widetilde{E}$ and $\widetilde{F}$ of $V$ which intersect $E$ in rank $0$ and $1$ respectively.
\item The locally free part $H$ of $E \cap \widetilde{F}$ satisfies
\[ -e \ \le \ \deg H \ \le \ - e + \frac{\varepsilon(n-1)}{4}. \]
\item If $n(g-1)$ is divisible by $4$, then $E \cap \widetilde{F}$ is a line subbundle of degree $-e$.
\end{enumerate}
\end{theorem}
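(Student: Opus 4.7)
The plan is to use Theorem \ref{nondefectivity} together with the lifting criterion of Proposition \ref{secantSegre}(2) (and its analogue for orthogonal reductions $H^\perp/H$) to produce $\widetilde E$ and $\widetilde F$, and to derive parts (2)--(3) from the dimension estimate (\ref{bigdim}).

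Existence of $\widetilde E$ is essentially formal: since $[V]$ lies on an $e$-secant by Theorem \ref{nondefectivity}, Proposition \ref{secantSegre}(2) produces an elementary transformation $F \subset E^*$ of degree $-e$ which lifts to a maximal Lagrangian subbundle $\widetilde E \subset V$ meeting $E$ in generic rank zero.

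For $\widetilde F$ and for the bounds in (2)--(3), fix a line subbundle $H \subset E$ of degree $-h$. By \cite[Criterion 2.3 (2)]{CH3} (the lifting analysis underlying Proposition \ref{dimcount}), a maximal Lagrangian $\widetilde F \supset H$ of $V$ with $\widetilde F \cap E = H$ corresponds to a Lagrangian subbundle $\widetilde F/H \subset H^\perp/H$ of degree $h-e$ transverse to $E/H$, and exists iff $[H^\perp/H] \in \Sec^{e-h} \Gr(2, E/H)$. The requirement $e - h \ge 0$ yields the lower bound $\deg H \ge -e$ of (2). The upper bound $\deg H \le -e + \varepsilon(n-1)/4$ follows by setting $r = 1$ in (\ref{bigdim}) and requiring the locus of $V$'s admitting such $(H,\widetilde F)$ to cover $\pp H^1(X, \wedge^2 E)$; after substituting $e = \tfrac12(n(g-1)+\varepsilon)$, this simplifies to $h \ge e - (n-1)\varepsilon/4$. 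To construct $\widetilde F$ for part (1), I take $h = e$, so $H$ is a line subbundle of $E$ of maximal degree $-e$ (existing for general $E$ by the Hirschowitz bound \cite{Hir}), and the existence condition becomes $[H^\perp/H] = 0$, i.e., the extension $0 \to E/H \to H^\perp/H \to (E/H)^* \to 0$ splits; then $\widetilde F$ is the preimage in $V$ of a Lagrangian complement of $E/H$. Part (3) then follows, since $\varepsilon = 0$ forces $\deg H = -e$ by (2).

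The main obstacle is establishing dominance of the incidence projection $\{(H,V) : [H^\perp/H] = 0\} \to \pp H^1(X, \wedge^2 E)$. When $\varepsilon > 0$, (\ref{bigdim}) gives strict inequality at $h = e$ and dominance is immediate; but when $\varepsilon = 0$ the count is an equality, and one must verify that the kernel dimension of $H^1(X, \wedge^2 E) \twoheadrightarrow H^1(X, \wedge^2(E/H))$ (computed via $0 \to H \otimes (E/H) \to \wedge^2 E \to \wedge^2(E/H) \to 0$), combined with the dimension of the Quot scheme of maximal line subbundles of $E$, yields an incidence of the correct dimension, and that the projection is generically finite. I expect that an irreducibility argument, or an explicit-example plus semicontinuity argument, will close this gap.
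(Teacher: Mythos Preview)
Your treatment of $\widetilde{E}$ and of parts (2)--(3) follows the paper closely. The genuine divergence is in how you produce $\widetilde{F}$ in part (1), and this is where the gap lies.

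You attempt to manufacture $\widetilde{F}$ by first fixing a line subbundle $H\subset E$ of degree $-e$ and then asking that $[H^\perp/H]=0$ (or, for $h<e$, that it lie on the appropriate secant). As you yourself note, this requires the incidence $\{(H,[V]):[V]\in\Ker(q_*{}^t q^*)\}\to \pp H^1(X,\wedge^2 E)$ to be dominant. Your claim that ``when $\varepsilon>0$ \dots\ dominance is immediate'' is not justified: formula (\ref{bigdim}) is an \emph{upper} bound on the dimension of the locus of such $V$, so the fact that it exceeds $h^1(X,\wedge^2 E)$ gives no information about dominance. For $\varepsilon=0$ the bound is exactly $h^1$, and again no conclusion follows. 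So in every case the existence of $\widetilde{F}$ is left open.

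The paper sidesteps this entirely. Instead of fixing a rank-one $H$, it fixes an arbitrary rank $n-1$ subbundle $I\subset E$, completes it over a trivialising open set to a Lagrangian $F\subset V$ with $\rank(E\cap F)=n-1$, and then works in the extension space $H^1(X,\wedge^2 F)$. There, Theorem \ref{nondefectivity} shows $\Sec^{(f+e)/2}\Gr(2,F)$ fills the whole space, so \emph{every} $[V]$ lies on it and Proposition \ref{secantSegre}(2) produces a maximal Lagrangian $\widetilde{F}$ lifting from $F^*$. The crucial extra ingredient---which your proposal lacks---is the parity structure of $\OG(n,2n)$ (Proposition \ref{OG}): since $\rank(E\cap F)=n-1\not\equiv n\bmod 2$ and $\rank(F\cap\widetilde{F})=0$, one deduces $\rank(E\cap\widetilde{F})$ is odd, hence positive, hence exactly $1$ by Proposition \ref{dimcount}. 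This argument works for all general $V$ without any dominance verification.

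Note also that the paper's logic for (3) runs in the opposite direction to yours: having already produced $\widetilde{F}$ in (1), it reads off $h=e$ from (2) and then deduces $[V]\in\Ker(q_*{}^t q^*)$. You instead try to use the splitting condition to \emph{construct} $\widetilde{F}$, which is why you inherit the dominance problem.
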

\begin{proof}
(1) By Proposition \ref{secantSegre} (1), the bundle $V$ is an extension $0 \to E \to V \to E^* \to 0$, with class $[V] \in H^1 (X, \wedge^2 E)$. By the non-defectivity statement Theorem \ref{nondefectivity}, the secant variety $\Sec^e \Gr(2,E)$ fills up the whole extension space $\pp H^1 (X, \wedge^2 E)$.
 Thus by the geometric lifting criterion Proposition \ref{secantSegre} (2), the extension $V$ contains a Lagrangian subbundle $\widetilde{E}$ lifting from $E^*$ such that $\deg \widetilde{E} \ge -e$.
 Since $E$ is maximal, in fact $\deg \widetilde{E} = -e$, so $\widetilde{E}$ is also maximal. Clearly $E \cap \widetilde{E}$ is trivial at a general point.

As for $\widetilde{F}$: Let $I$ be any rank $n-1$ subbundle of $E$, which is necessarily isotropic in $V$. Over an open set $U \subset X$ upon which $V$ is trivial, complete $I|_U$ to a Lagrangian subbundle of $V|_U$ intersecting $E|_U$ in $I|_U$. This gives a section of $\OG(n,V)$ over $U$. Since $X$ is of dimension one, we can extend this uniquely to a global section over $X$, and get a Lagrangian subbundle $F$ intersecting $E$ in rank $n-1$. Write $\deg F = -f$. Since $E$ is maximal Lagrangian, we have $f \ge e$.

Consider the extension space $H^1 (X, \wedge^2 F)$, in which $V$ is represented again. We claim that $h^0 (X, \wedge^2 F) = 0$. For; if $\gamma \colon F^* \to F$ were a nonzero map, then the composition $V \to F^* \to F \to V$ would be a nonzero nilpotent endomorphism of $V$, contradicting stability. Thus no such $\gamma$ exists, so
\[ h^0 (X, \wedge^2 F) \ \le \ h^0 (X, \Hom(F^*, F)) \ = \ 0. \]
Deforming $V$, $E$, $I$ and $F|_U$ if necessary, we may assume $F$ is a general stable bundle.
By the non-defectivity proven in Theorem \ref{nondefectivity}, the secant variety $\Sec^{\frac{1}{2}(f + e)}\Gr(2, F)$ has the expected dimension
\[ \min \left\{ (e+f)(n-1) - 1, h^1 (X, \wedge^2 F) - 1 \right\}. \]
Since $f \ge e = \frac{1}{2}(n(g-1) + \varepsilon)$ and $h^0 (X, \wedge^2 F) = 0$, one checks easily that this dimension is $h^1 ( X, \wedge^2 F) - 1$. 
 Therefore $\Sec^{\frac{1}{2}(f+e)} \Gr(2,F)$ fills the extension space $\pp H^1 (X, \wedge^2 F)$. Hence by Proposition \ref{secantSegre} (2), the extension $V$ contains a Lagrangian subbundle $\widetilde{F}$ lifting from $F^*$, satisfying $\deg \widetilde{F} \ge -e$. Since $E$ is maximal, again $\deg \widetilde{F} = -e$, so $\widetilde{F}$ is a maximal Lagrangian subbundle.

Now $\rank(E \cap F) = n-1 \not\equiv n \mod 2$. Hence by Proposition \ref{OG} and the discussion following it, $E$ and $F$ belong to opposite components of $\OG(n, V)$. Then the fact that $\rank (F \cap \widetilde{F}) = 0$ implies that $F$ and $\widetilde{F}$ belong to the same component if $n$ is even, and to opposite components if $n$ is odd. In both cases, the rank of $E \cap \widetilde{F}$ is odd, so in particular non-zero. By Proposition \ref{dimcount} and by generality, $\rank (E \cap \widetilde{F}) = 1$.\\
\par
(2) 
 Let us calculate the minimum value of $h$ in order for a general extension $0 \to E \to V \to E^* \to 0$ to admit a Lagrangian subbundle of degree $-e$ whose intersection with $E$ contains a line bundle $H$ of degree $-h$. As before, the dimension of the locus of such extensions is in general bounded above by the expression (\ref{bigdim}). The required inequality in $h$ is therefore
\[ e(n - r) - (e - h)(r + 1) - \frac{1}{2}(n - r)(n + r - 1)(g - 1) \ \ge \ 0 \]
where $r=1$ and $e = \frac{1}{2}(n(g-1)+\varepsilon)$. Computing, we obtain the inequality
\[ h \ \ge \ e - \frac{\varepsilon(n - 1)}{4}. \]
Moreover, by (\ref{emhpos}), we have $h \le e$. The desired inequality follows.\\
\par
(3) If $n(g-1) \equiv 0 \mod 4$, then $\varepsilon = 0$ by Theorem \ref{upperbound}. By part (2), we have $h = e$. By the proof of Proposition \ref{dimcount}, we have $[V] \in \Ker \left( q_* {^tq}^* \right)$ for some such $H$ (here we adhere to the convention that the empty set $\Sec^0 \Gr(2, E/H)$ has dimension $-1$). Thus $H^\perp / H$ splits as $E/H \oplus (E/H)^*$, and the bundle $F$ is the inverse image of $(E/H)^*$ in $H^\perp \subset V$. Thus $E \cap F$ is exactly the line bundle $H$. %
\end{proof}

\begin{remark} If $e = \frac{1}{2}n(g-1)$, we can determine the line bundle $H$ up to a point of order two in $\Pic^0 X$. Since $[V]$ belongs to $H^1(X, \wedge^2 E)$, the extension structures $0 \to E \to V \to E^* \to 0$ and $0 \to E \to V^* \to E^* \to 0$ are isomorphic, and similarly for $0 \to F \to V \to F^* \to 0$. Thus we can identify the diagrams
\[ \xymatrix{ H \ar[r] \ar[d] & F \ar[r] \ar[d] & F/H \ar[d] \\
 E \ar[r] \ar[d] & V \ar[r] \ar[d] & E^* \ar[d] \\
 E/H \ar[r] & F^* \ar[r] & \frac{E^*}{F/H} } \quad \hbox{and} \quad \xymatrix{ \left( \frac{E^*}{F/H} \right)^* \ar[r] \ar[d] & F \ar[r] \ar[d] & \left( E/H \right)^* \ar[d] \\
 E \ar[r] \ar[d] & V^* \ar[r] \ar[d] & E^* \ar[d] \\
 \left( F/H \right)^* \ar[r] & F^* \ar[r] & H^{-1} } \]
Hence $F/H \cong (E/H)^*$. Taking determinants, we obtain $H^2 \ \cong \ \det E \cdot \det F$. \qed
\end{remark}

We conclude by describing the intersection of two maximal Lagrangian subbundles of a general orthogonal bundle of odd rank. This is an easy consequence of \cite[Proposition 5.5]{CH4}:

\begin{prop} Suppose $X$ has genus $g \ge 5$, and let $V \to X$ be a general orthogonal bundle of rank $2n+1$ where $n \ge 1$. Then any pair of maximal Lagrangian subbundles of $V$ intersect trivially in a generic fiber of $V$.
\end{prop}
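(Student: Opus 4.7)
The plan is a direct reduction to \cite[Proposition 5.5]{CH4}, together with a dimension count parallel to that of Proposition \ref{dimcount}. The crucial difference with the even rank setting of Theorem \ref{intersection} is that the Lagrangian Grassmannian $\OG(n, 2n+1)$ is irreducible, so there is no parity constraint forcing two maximal Lagrangian subbundles to intersect nontrivially. This removes precisely the obstruction that made part (1) of Theorem \ref{intersection} produce an intersection of rank exactly $1$.

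Concretely, I would fix a general maximal Lagrangian subbundle $E \subset V$ and, for each $r$ with $1 \le r \le n-1$, bound the locus of those extensions $V$ admitting a second maximal Lagrangian $F$ with $\rank(E \cap F) \ge r$. Writing $H := E \cap F$ for a rank $r$ subbundle of degree $-h$, the odd-rank analog of the lifting criterion Proposition \ref{secantSegre}(2) forces $[H^\perp/H]$ to lie in a secant variety of $\Gr(2, E/H)$ inside the appropriate extension space. Proposition 5.5 of \cite{CH4} supplies precisely the dimension bound on the locus of such $V$, playing the role of (\ref{bigdim}). A routine computation using $h \le e$ (the odd-rank analog of (\ref{emhpos})) together with the bound on $e$ coming from the odd-rank version of Theorem \ref{upperbound} then shows that this dimension is strictly smaller than $\dim H^1(X, \wedge^2 E)$ for every $r \ge 1$; because no parity gap in admissible degrees of Lagrangian subbundles appears in the odd rank case, the inequality is strict rather than merely non-strict.

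The extremal case $r = n-1$ will require separate treatment paralleling the end of the proof of Proposition \ref{dimcount}: here $E/H$ is a line bundle, so $h^1(X, \wedge^2 (E/H)) = 0$, and the Lagrangian subbundle $F$ is just the preimage of $(E/H)^*$ in $H^\perp \subset V$, of degree $e - 2h$. Hirschowitz' bound \cite[Th\'eor\`eme 4.4]{Hir} applied to the general $E$ excludes this configuration for $g \ge 5$ and $n \ge 1$. The main obstacle is the routine but slightly unpleasant task of importing the odd rank setup correctly, namely verifying that the extension space classifying orthogonal bundles of rank $2n+1$ with a fixed Lagrangian $E$ of rank $n$ takes the form used in \cite[Proposition 5.5]{CH4}, and that the analogue of the $\Gr(2, E)$ embedding still enjoys the non-defectivity of Theorem \ref{nondefectivity}. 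Once this translation is in place, the dimension count proceeds exactly as in Proposition \ref{dimcount}, but now with every rank $r \ge 1$ excluded, yielding the claim.
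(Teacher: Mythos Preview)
Your plan is essentially the paper's approach: both reduce directly to the dimension count in \cite[Proposition 5.5]{CH4}. The paper's execution, however, is considerably leaner. It sets up the odd-rank extension $0 \to E \to V \to F^* \to 0$ with $F = E^\perp$ and $0 \to E \to F \to \ox \to 0$, then extracts from the proof of \cite[Proposition 5.5]{CH4} the single inequality
\[
e \ = \ \tfrac{1}{2}\bigl((n+1)(g-1)+\varepsilon\bigr) \ \le \ \tfrac{1}{2}(n+r+1)(g-1)
\]
guaranteeing that a general $V$ has no second maximal Lagrangian meeting $E$ in rank $r$. For $r \ge 1$ this reduces to $\varepsilon < g-1$, which holds since $\varepsilon \le 3$ and $g \ge 5$. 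That is the entire argument.

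Two points where your outline over-reaches. First, non-defectivity (Theorem \ref{nondefectivity}) is not used at all here; the result is imported wholesale from \cite{CH4}, so your worry about ``verifying that the analogue of the $\Gr(2,E)$ embedding still enjoys the non-defectivity'' is unnecessary. Second, your proposed separate treatment of $r = n-1$ copies the even-rank argument from Proposition \ref{dimcount} too literally: when $\rank V = 2n+1$ and $r = n-1$, the quotient $H^\perp/H$ has rank $3$, not $2$, so $\wedge^2(E/H) = 0$ does not force a splitting $E/H \oplus (E/H)^*$, and ``the preimage of $(E/H)^*$'' is not the right description of the second Lagrangian. The paper does not treat $r = n-1$ by this route; its argument simply quotes the range handled in \cite[Proposition 5.5]{CH4}.
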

\begin{proof}
We may assume $n \ge 2$, the statement being obvious for $n= 1$. Let $E$ be a general bundle of rank $n$ and degree $-e = -\frac{1}{2}((n+1)(g-1)+\varepsilon)$ as above. Let $0 \to E \to F \to \ox \to 0$ be a general extension. We consider orthogonal extensions $0 \to E \to V \to F^* \to 0$ in the sense of \cite[\S 3]{CH4}, where $F = E^\perp$. By the proof of \cite[Proposition 5.5]{CH4}, for $0 \le r \le n-2$, a general such $V$ admits no maximal Lagrangian subbundle intersecting $E$ in rank $r$ if
\[ e \ = \ \frac{1}{2} \left( (n+1)(g-1) + \varepsilon \right) \le \ \frac{1}{2}(n+r+1)(g-1). \]
This is true for any $r \ge 1$ if $\varepsilon < g-1$, which holds since $\varepsilon \le 3$ and $g \ge 5$. \end{proof}

\section*{Acknowledgements}

The first author acknowledges the support of the Korea Institute for Advanced Study (KIAS) as an associate member in 2014. The second author thanks the organizing committee of ``Vector Bundles on Algebraic Curves 2014'' for financial support and for a most interesting and enjoyable conference.

\vspace{0.5cm}

\noindent \footnotesize{Department of Mathematics, Konkuk University, 1 Hwayang-dong, Gwangjin-Gu, Seoul 143-701, Korea.\\
E-mail: \texttt{ischoe@konkuk.ac.kr}\\
\\
H\o gskolen i Oslo og Akershus, Postboks 4, St. Olavs plass, 0130 Oslo, Norway.\\
E-mail: \texttt{george.hitching@hioa.no}}

\end{document}